\newlength{\defbaselineskip}
\newcommand{\setlinespacing}[1]%
           {\setlength{\baselineskip}{#1 \defbaselineskip}}
\numberwithin{equation}{section}
\newtheorem{thm}{Theorem}[section]
\newtheorem{cor}[thm]{Corollary}
\newtheorem{lem}[thm]{Lemma}
\theoremstyle{definition}
\theoremstyle{remark}
\numberwithin{equation}{section}
\begin{document}

\title[Weighted $L^2$ estimates for the wave equation]
{On weighted $L^2$ estimates for solutions of the wave equation}

\author{Youngwoo Koh and Ihyeok Seo}

\subjclass[2010]{Primary: 35B45; Secondary: 35L05, 42B35}
\keywords{Weighted estimates, Wave equation, Morrey-Campanato class}
\thanks{The first named author was supported by NRF grant 2012-008373.}

\address{School of Mathematics, Korea Institute for Advanced Study, Seoul 130-722, Republic of Korea}
\email{ywkoh@kias.re.kr}

\address{Department of Mathematics, Sungkyunkwan University, Suwon 440-746, Republic of Korea}
\email{ihseo@skku.edu}

\maketitle


\begin{abstract}
In this paper we consider weighted $L^2$ integrability for solutions of the wave equation.
For this, we obtain some weighed $L^2$ estimates for the solutions with weights in Morrey-Campanato classes.
Our method is based on a combination of bilinear interpolation and localization argument which makes use of
Littlewood-Paley theorem and a property of Hardy-Littlewood maximal functions.
We also apply the estimates to the problem of well-posedness for wave equations with potentials.
\end{abstract}

\section{Introduction}

Let us first consider the following Cauchy problem associated with the wave equation
in $\mathbb{R}^{n+1}$, $n\geq2$:
\begin{equation}\label{wave}
\begin{cases}
-\partial_t^2u+\Delta u=F(x,t),\\
\,\qquad u(x,0)=f(x),\\
\text{ }\quad\partial_t u(x,0)=g(x).
\end{cases}
\end{equation}
Then it is well known that the solution is given by
\begin{equation}\label{sol}
u(x,t)=\cos(t\sqrt{-\Delta})f+\frac{\sin(t\sqrt{-\Delta})}{\sqrt{-\Delta}}g
-\int_0^t\frac{\sin((t-s)\sqrt{-\Delta})}{\sqrt{-\Delta}}F(s)ds
\end{equation}
in view of the Fourier transform.

The aim of this paper is to find a suitable relation between the Cauchy data $f,g$, the forcing term $F$
and the weight $w(x,t)$ which guarantee that the solution lies in weighted $L^2$ spaces, $L_{x,t}^2(w)$.

A natural way of approaching this problem may be to control weighted $L^2$ integrability
of the solution in terms of regularity of $f,g,F$.
In fact our basic strategy is to obtain the following type of estimates:
\begin{equation}\label{wei}
\|u\|_{L_{x,t}^2(w)}\leq C_{s,\widetilde{s},q,r}(w)\big(\|f\|_{\dot{H}^s}+\|g\|_{\dot{H}^{s-1}}+\|F\|_{L^q_t\dot B_{r,2}^{\widetilde{s}}}\big),
\end{equation}
where $\dot B_{r,2}^{\widetilde{s}}$ is the usual homogeneous Besov space ({\it cf. \cite{BL}}),
and $C_{s,\widetilde{s},q,r}(w)$ is a suitable constant depending on
the regularity indexes $s,\widetilde{s},q,r$ and the weight $w$.
The point here is that $C_{s,\widetilde{s},q,r}(w)$ reflects some information about the relation
between the weight and the regularity of $f,g,F$.
By considering the operator $e^{it\sqrt{-\Delta}}$, the estimate \eqref{wei} will consist of
the homogeneous estimate
\begin{equation*}
\big\|e^{it\sqrt{-\Delta}}f\big\|_{L_{x,t}^2(w)}\leq C_s(w)\|f\|_{\dot{H}^s}
\end{equation*}
and the inhomogeneous estimate
\begin{equation*}
\bigg\|\int_0^te^{i(t-s)\sqrt{-\Delta}}F(\cdot,s)ds\bigg\|_{L^2_{x,t}(w)}
\leq C_{\widetilde{s},q,r}(w)\|F\|_{L^q_t\dot B_{r,2}^{\widetilde{s}+1}}.
\end{equation*}

Before stating our results, we need to introduce a function class.
For $\alpha>0$ and $1\leq p\leq n/\alpha$,
a function $f\in L_{\textrm{loc}}^p(\mathbb{R}^{n})$ is said to be in
the Morrey-Campanato class $\mathfrak{L}^{\alpha,p}$ if
$$\|f\|_{\mathfrak{L}^{\alpha,p}}
:=\sup_{Q\text{ cubes in }\mathbb{R}^{n}}|Q|^{\alpha/n} \Big( \frac{1}{|Q|} \int_{Q} |f(y)|^p dy \Big)^{\frac{1}{p}}<\infty.$$
In particular, $\mathfrak{L}^{\alpha,p}=L^{p}$ when $p=n/\alpha$,
and even $L^{n/\alpha,\infty}\subset\mathfrak{L}^{\alpha,p}$ for $p<n/\alpha$.

Then we have the following result which can be regarded as the weighted $L^2$ Strichartz estimates
for the wave equation.
Strichartz estimates on weighted $L^2$ spaces have been studied
for the Schr\"odinger equation (\cite{RV,BBRV,BBCRV,S}).

\begin{thm}\label{thm}
Let $n \geq 2$.
Then we have for $(n+1)/4\leq s<n/2$ and $1<p\leq(n+1)/(2s+1)$
\begin{equation}\label{hop}
\big\|e^{it\sqrt{-\Delta}}f\big\|_{L^2(w(x,t))}\leq C\|w\|_{\mathfrak{L}^{2s+1,p}}^{1/2}\|f\|_{\dot{H}^s},
\end{equation}
and for $\widetilde{s}>(n-1)/2$ and $1\leq q,r\leq2$
\begin{equation}\label{inho}
\bigg\|\int_{0}^{t}e^{i(t-s)\sqrt{-\Delta}}F(s)ds\bigg\|_{L^2(w(x,t))}
\leq C\|w\|_{\mathfrak{L}^{\alpha,p}}^{1/2}\|F\|_{L^q_t\dot B_{r,2}^{\widetilde{s}+1}}
\end{equation}
with $1<p\leq(n+1)/\alpha$ and
\begin{equation}\label{sca}
\alpha=2(\widetilde{s}-\frac1q-\frac nr+\frac{n+4}2)+1.
\end{equation}
\end{thm}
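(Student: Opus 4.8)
The plan is to establish the homogeneous bound \eqref{hop} first and then treat the inhomogeneous bound \eqref{inho}. Since the weight depends on $(x,t)$, I read $\mathfrak{L}^{\alpha,p}$ as the Morrey-Campanato class over cubes in $\mathbb{R}^{n+1}$, for which $\|w(\cdot/\lambda)\|_{\mathfrak{L}^{\alpha,p}}=\lambda^{\alpha}\|w\|_{\mathfrak{L}^{\alpha,p}}$; comparing this with the scalings of $\|e^{it\sqrt{-\Delta}}f\|_{L^2(w)}$ and $\|f\|_{\dot H^s}$ shows that $\alpha=2s+1$ is exactly the scale-invariant choice, which fixes the exponent in \eqref{hop}. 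To prove \eqref{hop} I would square the left-hand side and regard it as the diagonal of the bilinear form $B(f,g)=\int e^{it\sqrt{-\Delta}}f\,\overline{e^{it\sqrt{-\Delta}}g}\,w\,dx\,dt$, reducing the goal to $|B(f,g)|\lesssim\|w\|_{\mathfrak{L}^{2s+1,p}}\|f\|_{\dot H^s}\|g\|_{\dot H^s}$; this is the object on which bilinear interpolation in $(s,p)$ will act.

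I would then decompose $f=\sum_\lambda P_\lambda f$ by Littlewood-Paley and, using the two scaling laws above, reduce to a single frequency $\lambda\sim1$. For the localized operator $S:=e^{it\sqrt{-\Delta}}P_1$ the estimate is obtained by interpolating the two $p$-endpoints. At $p=(n+1)/(2s+1)$ one has $\mathfrak{L}^{\alpha,p}=L^{(n+1)/(2s+1)}$, and a single application of H\"older turns the claim into the diagonal Strichartz (cone restriction) estimate $\|e^{it\sqrt{-\Delta}}f\|_{L^{q_0}_{x,t}}\lesssim\|f\|_{\dot H^s}$ with $q_0=2(n+1)/(n-2s)$, which is admissible for $1/2\le s<n/2$. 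At the opposite end, near $p=1$, I would use the pointwise dispersive bound for the kernel of $SS^{*}$ together with the Morrey bound $\int_Q w\lesssim|Q|^{1-\alpha/(n+1)}\|w\|_{\mathfrak{L}^{\alpha,p}}$. The passage from these cube-by-cube local bounds to the global Morrey-Campanato norm is where the Hardy-Littlewood maximal operator enters: a sum of the form $\sum_Q(\text{average of }w\text{ over }Q)\,a_Q$ is controlled by $\|w\|_{\mathfrak{L}^{\alpha,p}}$ times an $\ell^{p'}$-quantity only because $M$ is bounded on $L^{p}$, which is precisely why the range is restricted to $p>1$.

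The step I expect to be the main obstacle is reassembling the frequency pieces in the \emph{weighted} norm: a general $w\in\mathfrak{L}^{\alpha,p}$ is not a Muckenhoupt weight, so there is no automatic square-function identity or orthogonality in $L^2(w)$, and the single-frequency estimates cannot simply be summed. I would handle this through the maximal-function property applied to $w^{p}$ together with the Littlewood-Paley theorem on the unweighted side. This is also the point that consumes the lower threshold $s\ge(n+1)/4$: decomposing the kernel of $SS^{*}$ into dyadic shells $|t-s|\sim2^{j}$ and bounding $w$ on the corresponding light-cone neighbourhoods by the Morrey estimate produces a geometric series in $j$ with ratio $2^{j(n+1-\alpha-(n-1)/2)}$, which converges for $\alpha>(n+3)/2$; the borderline $\alpha=(n+3)/2$, i.e. $s=(n+1)/4$, is recovered through the $p>1$ refinement.

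For the inhomogeneous bound \eqref{inho} the tempting route is to factor the Duhamel integral through the free propagator and invoke \eqref{hop} on the output; however, writing $\int_{\mathbb{R}}e^{i(t-s)\sqrt{-\Delta}}F(s)\,ds=e^{it\sqrt{-\Delta}}\int e^{-is\sqrt{-\Delta}}F(s)\,ds$ and estimating the inner integral by dual Strichartz leads to a weight exponent different from \eqref{sca}, so the retarded truncation to $\{s<t\}$ must genuinely be exploited. I would therefore estimate the truncated Duhamel operator directly, running the same Littlewood-Paley reduction to frequency one and then bilinear interpolation between an energy endpoint (where $L^2$ unitarity in $x$ and Young's inequality in the variable $s$ produce the $-1/q$ term) and a dispersive endpoint (where the decay $\|e^{i(t-s)\sqrt{-\Delta}}P_1\|_{L^1\to L^\infty}\lesssim(1+|t-s|)^{-(n-1)/2}$ handles the spatial $L^r$ input and produces the $-n/r$ term). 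The exponent \eqref{sca} is then the scale-invariant exponent of the resulting trilinear form, the second Besov index $2$ supplies the frequency summation by Cauchy-Schwarz, and the hypothesis $\widetilde{s}>(n-1)/2$ is exactly the threshold that makes the dispersive decay summable over dyadic frequencies and integrable in time. The delicate part here is bookkeeping the interplay between the time-decay rate $(n-1)/2$ and the Besov regularity $\widetilde{s}+1$ so that the interpolated exponent matches \eqref{sca} on the nose.
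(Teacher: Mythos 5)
Your skeleton is largely the right one: the scaling identification $\alpha=2s+1$, the Littlewood--Paley reduction to a single frequency, the dispersive bound $\|\psi_jK\|_\infty\lesssim 2^{-j(n-1)/2}$ for the $TT^*$ kernel on dyadic space--time shells, the localization to cubes of side $2^j$ together with the Morrey bound on $\int_Q w^p$, and the computation showing the dyadic series has ratio $2^{j((n+3)/2-\alpha)}$, hence the threshold $s\geq(n+1)/4$ --- all of this matches the paper, as does your diagnosis that the real obstacle is reassembling the frequency pieces in $L^2(w)$ and that the maximal function is the cure. (The paper majorizes $w$ by $w_*=(M(w(\cdot,t)^\rho))^{1/\rho}$ with $1<\rho<p$, shows $\|w_*\|_{\mathfrak{L}^{\alpha,p}}\lesssim\|w\|_{\mathfrak{L}^{\alpha,p}}$ and $w_*(\cdot,t)\in A_2(\mathbb{R}^n)$ uniformly in $t$ via Coifman--Rochberg, and then invokes Kurtz's \emph{weighted} Littlewood--Paley theorem; your ``Littlewood--Paley on the unweighted side'' will not close this step, and this is also where $p>1$ is consumed.)

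The genuine gap is in the interpolation. You propose to interpolate between two values of the Morrey integrability index $p$: the endpoint $p=(n+1)/(2s+1)$, where $\mathfrak{L}^{2s+1,p}=L^{(n+1)/(2s+1)}$ and H\"older plus Strichartz suffice, and an endpoint near $p=1$ from the dispersive/Morrey computation. But Morrey--Campanato classes are not an interpolation scale: the embedding $\mathfrak{L}^{\alpha,p_\theta}\hookrightarrow(\mathfrak{L}^{\alpha,p_0},\mathfrak{L}^{\alpha,p_1})_{\theta,q}$ that would be needed to transfer two endpoint bounds to intermediate $p$ fails in general (the classical non-interpolation phenomenon for Morrey spaces), so ``interpolating the two $p$-endpoints'' is not a legitimate step; it also leaves unexplained how the borderline $s=(n+1)/4$ is ``recovered through the $p>1$ refinement.'' The paper's mechanism is different: $p$ is \emph{fixed} throughout, the dyadic-piece bounds are proved with the weight moved onto the functions --- the three estimates \eqref{21}--\eqref{23}, with $F,G$ in $L^2(w^{-p})$ or $L^2$ and with two distinct exponential rates $\beta_0,\beta_1$ --- and the bilinear real interpolation lemma of Bergh--L\"ofstr\"om is applied in the spaces $(L^2(w^{-p}),L^2)_{1/p',2}=L^2(w^{-1})$, with target $(\ell^{\beta_0}_\infty,\ell^{\beta_1}_\infty)_{2/p',1}=\ell^0_1$; it is precisely the second index $1$ in the target that sums the dyadic series at the endpoint. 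The same issue recurs in your inhomogeneous argument: interpolating an ``energy endpoint'' against a ``dispersive endpoint'' reproduces unweighted Strichartz bookkeeping but never lets the weight enter. In the paper both endpoint estimates \eqref{31} and \eqref{32} use the dispersive kernel bound, and the interpolation is again in the weight exponent on the dual function $G$ (from $L^2$ to $L^2(w^{-p})$, landing at $L^2(w^{-1})$).
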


\noindent\textit{Remarks.} \textit{(i)}
From the definition it is clear that
$\|f(\lambda\cdot)\|_{\mathfrak{L}^{\alpha,p}}=\lambda^{-\alpha}\|f\|_{\mathfrak{L}^{\alpha,p}}$.
Using this one can see that $\alpha=2s+1$ is the only possible index which allows \eqref{hop}
to be invariant under the scaling $(x,t)\rightarrow(\lambda x,\lambda t)$, $\lambda>0$.

\smallskip

\noindent\textit{(ii)}
In the special case where $\widetilde{s}+1=0$,
one can see that $L^r\subset\dot B_{r,2}^{\widetilde{s}+1}$ for $1<r\leq2$,
and \eqref{sca} is just the scaling condition for \eqref{inho}
with $L^r$ instead of $\dot B_{r,2}^{\widetilde{s}+1}$.

\smallskip

\noindent\textit{(iii)}
From the classical Strichartz's estimate (\cite{Str}), one can see that
\begin{equation*}
\big\|e^{it\sqrt{-\triangle}}f\big\|_{L^r_{x,t}}
\leq C\|f\|_{\dot{H}^s}
\end{equation*}
for $2(n+1)/(n-1)\leq r<\infty$ and $s=n/2-(n+1)/r$.
Then, by this and H\"older's inequality,
it follows that
for $1/2\leq s<n/2$,
\begin{equation}\label{fcs}
\big\|e^{it\sqrt{-\triangle}}f\big\|_{L^2(w(x,t))}^2
\leq\|w\|_{L^{\frac{r}{r-2}}}\|e^{it\sqrt{-\triangle}}f\|_{L^r}^2
\leq C\|w\|_{L^{\frac{n+1}{2s+1}}}\|f\|_{\dot{H}^s}^2.
\end{equation}
Hence our estimate \eqref{hop} can be seen as natural extensions to
the Morrey-Campanato classes of \eqref{fcs}.

\medskip

Some weighted $L_{x,t}^2$ estimates are known for the wave equation.
In particular, \eqref{hop} can be found in \cite{RV} for $w(x,t)$ satisfying $\sup_tw(x,t)\in\mathfrak{L}^{2,p}(\mathbb{R}^n)$
with $p>(n-1)/2$, $n\geq3$.
In fact, it is easy to see that this condition implies $w\in\mathfrak{L}^{2,p}(\mathbb{R}^{n+1})$ for the same $p$.
For a specific weight $w(x,t)=|x|^{-(2s+1)}$, $0<s<(n-1)/2$,
\eqref{hop} is proved in \cite{HMSSZ} (see (3.6) there).
Note that $|x|^{-(2s+1)}\in\mathfrak{L}^{2s+1,p}(\mathbb{R}^{n+1})$ for $p<n/(2s+1)$.
But, our theorem gives estimates for more general time-dependent weights $w(x,t)$.
Also, the following smoothing estimates (known for Morawetz estimates)
\begin{equation}\label{mor}
\big\||x|^{-b/2}e^{itD^a}f\big\|_{L_{t,x}^2}\leq C\|D^{(b-a)/2}f\|_2
\end{equation}
have been proved by many authors
for the wave equation ($a=1$) \cite{M} and for the Schr\"odinger equation ($a=2$) \cite{KY,S,V}.
For more general Morawetz estimates with angular smoothing, see \cite{H,S,FW}.
These estimates can be compared with \eqref{hop} for the wave equation ($a=1$).
Indeed, since $w(x)$ is in $\mathfrak{L}^{\alpha,p}(\mathbb{R}^{n+1})$
if $w(x)\in\mathfrak{L}^{\alpha,p}(\mathbb{R}^n)$ for $1\leq p\leq n/\alpha$,
the following corollary is directly deduced from \eqref{hop} with $s=(b-1)/2$.
Then, \eqref{mor} with $a=1$ and $(n+3)/2\leq b<n$ is just a particular case of \eqref{coo}
because $|x|^{-b}\in\mathfrak{L}^{b,p}(\mathbb{R}^n)$.

\begin{cor}
Let $n>3$. Then we have
\begin{equation}\label{coo}
\big\||w(x)|^{1/2}e^{it\sqrt{-\triangle}}f\big\|_{L_{t,x}^2}
\leq C\|D^{(b-1)/2}f\|_2
\end{equation}
if $w\in\mathfrak{L}^{b,p}(\mathbb{R}^n)$ for $(n+3)/2\leq b<n$ and $1<p\leq n/b$.
\end{cor}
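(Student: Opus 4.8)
The plan is to deduce the Corollary as a direct specialization of the homogeneous estimate \eqref{hop}, exploiting the fact that a time-independent weight lifts to the Morrey-Campanato class in one higher dimension with the same exponents. First I would set $s=(b-1)/2$, so that the hypothesis $(n+3)/2\le b<n$ becomes $(n+2)/2\le s<(n-1)/2$; in particular $(n+1)/4\le s<n/2$ holds for $n>3$, which is exactly the admissible range for \eqref{hop}. Correspondingly $2s+1=b$, and the condition $1<p\le(n+1)/(2s+1)=(n+1)/b$ in \eqref{hop} is implied by the stronger hypothesis $1<p\le n/b$ of the Corollary (since $n/b\le(n+1)/b$); also $1<p\le n/b$ is precisely the integrability range $1\le p\le n/\alpha$ with $\alpha=b$ required for the Morrey-Campanato class $\mathfrak{L}^{b,p}(\mathbb{R}^n)$ to be defined, so the hypotheses are consistent.

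Next I would record the elementary embedding: if $w\in\mathfrak{L}^{\alpha,p}(\mathbb{R}^n)$ with $1\le p\le n/\alpha$, then, regarded as a function on $\mathbb{R}^{n+1}$ constant in the last variable, $w\in\mathfrak{L}^{\alpha,p}(\mathbb{R}^{n+1})$ with $\|w\|_{\mathfrak{L}^{\alpha,p}(\mathbb{R}^{n+1})}=\|w\|_{\mathfrak{L}^{\alpha,p}(\mathbb{R}^n)}$. Indeed, for a cube $Q=Q'\times I\subset\mathbb{R}^{n+1}$ with side length $\ell$, one has $|Q|=\ell|Q'|$, $|Q|^{\alpha/n}=\ell^{\alpha/n}|Q'|^{\alpha/n}$, and since $w$ does not depend on the last coordinate,
$$\Big(\frac{1}{|Q|}\int_Q|w|^p\Big)^{1/p}=\Big(\frac{1}{|Q'|}\int_{Q'}|w|^p\Big)^{1/p}.$$
Multiplying by $|Q|^{\alpha/n}$ and taking the supremum over cubes $Q$ in $\mathbb{R}^{n+1}$ gives $\|w\|_{\mathfrak{L}^{\alpha,p}(\mathbb{R}^{n+1})}\le\|w\|_{\mathfrak{L}^{\alpha,p}(\mathbb{R}^n)}$ (the factor $\ell^{\alpha/n}>0$ being absorbed into the $|Q'|^{\alpha/n}$ term, and the reverse inequality following by taking $I$ fixed); this is the statement quoted in the text just before the Corollary.

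Finally I would combine these two observations: treating $w(x)$ as an element of $\mathfrak{L}^{b,p}(\mathbb{R}^{n+1})$ and applying \eqref{hop} with $s=(b-1)/2$ and $\alpha=2s+1=b$ yields
$$\big\||w(x)|^{1/2}e^{it\sqrt{-\Delta}}f\big\|_{L^2_{t,x}}\le C\|w\|_{\mathfrak{L}^{b,p}(\mathbb{R}^{n+1})}^{1/2}\|f\|_{\dot H^{(b-1)/2}}=C\|w\|_{\mathfrak{L}^{b,p}(\mathbb{R}^n)}^{1/2}\|D^{(b-1)/2}f\|_2,$$
using $\|f\|_{\dot H^s}=\|D^s f\|_2$. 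This is exactly \eqref{coo}. There is essentially no obstacle here: the only thing to be careful about is matching the numerology, namely checking that the range $(n+3)/2\le b<n$ together with $n>3$ lands strictly inside the admissible window $(n+1)/4\le s<n/2$ for \eqref{hop} and that $1<p\le n/b$ is compatible with $1<p\le(n+1)/b$ — both of which are immediate. The remark about $|x|^{-b}\in\mathfrak{L}^{b,p}(\mathbb{R}^n)$ for $p<n/b$ then recovers the Morawetz-type estimate \eqref{mor} with $a=1$ as the stated particular case.
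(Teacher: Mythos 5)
Your proposal is correct and is exactly the paper's (one-line) argument: lift the time-independent weight to $\mathfrak{L}^{b,p}(\mathbb{R}^{n+1})$ and apply \eqref{hop} with $s=(b-1)/2$, checking that $(n+3)/2\le b<n$ and $1<p\le n/b$ place $(s,p)$ inside the admissible range of Theorem \ref{thm}. Two harmless slips in your write-up: the condition $b\ge(n+3)/2$ translates to $s\ge(n+1)/4$, not $s\ge(n+2)/2$; and the Morrey--Campanato norm on $\mathbb{R}^{n+1}$ carries the factor $|Q|^{\alpha/(n+1)}$ rather than $|Q|^{\alpha/n}$, so for a cube $Q=Q'\times I$ of side $\ell$ one has $|Q|^{\alpha/(n+1)}=\ell^{\alpha}=|Q'|^{\alpha/n}$ and the two norms agree exactly, with no stray factor $\ell^{\alpha/n}$ to absorb (nor any freedom to ``fix $I$'' independently of $Q'$, since $Q$ ranges over cubes).
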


Compared with the index $\alpha=2s+1$ in \eqref{hop},
we can have the same index of $\alpha$ in \eqref{inho} if
$$\frac{n+1}{4}\leq\widetilde{s}-\frac1q-\frac{n}r+\frac{n+4}{2}<\frac n2$$
(see \eqref{sca}).
Hence, setting $s=\widetilde{s}-1/q-n/r+(n+4)/2$ and using the fact that
$$\|(\sqrt{-\Delta})^{-1}f\|_{\dot B_{r,2}^{\widetilde{s}+1}}\leq C\|f\|_{\dot B_{r,2}^{\widetilde{s}}},$$
the following corollary is deduced from a simple combination of
\eqref{sol}, \eqref{hop} and \eqref{inho}.

\begin{cor}
Let $n\geq 2$.
If $u$ is a solution of the Cauchy problem \eqref{wave}, then
\begin{equation*}
\|u\|_{L^2(w(x,t))}\leq C\|w\|_{\mathfrak{L}^{2s+1,p}}^{1/2}\Big(\|f\|_{\dot{H}^s}+\|g\|_{\dot{H}^{s-1}}+
\|F\|_{L^q_t\dot B_{r,2}^{\widetilde{s}}}\Big)
\end{equation*}
if $\frac{n+1}{4}\leq s<\frac{n}{2}$, $1< p\leq\frac{n+1}{2s+1}$,
$\widetilde{s}>\frac{n-1}2$, and $1\leq q,r\leq2$, with
$s=\widetilde{s}-\frac1q-\frac{n}r+\frac{n+4}{2}$.
\end{cor}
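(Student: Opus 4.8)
The plan is to substitute the solution formula \eqref{sol} into the left-hand side and bound the three resulting pieces separately via the triangle inequality in $L^2(w(x,t))$. Since $\cos(t\sqrt{-\Delta})=\tfrac12\big(e^{it\sqrt{-\Delta}}+e^{-it\sqrt{-\Delta}}\big)$ and $\sin(t\sqrt{-\Delta})=\tfrac1{2i}\big(e^{it\sqrt{-\Delta}}-e^{-it\sqrt{-\Delta}}\big)$, each piece reduces to an expression involving only the half-wave propagators $e^{\pm it\sqrt{-\Delta}}$. The estimates \eqref{hop} and \eqref{inho} are stated for $e^{it\sqrt{-\Delta}}$, but the versions for $e^{-it\sqrt{-\Delta}}$ follow at once: after the change of variables $t\mapsto-t$ one only replaces $w(x,t)$ by $w(x,-t)$, which has the same Morrey-Campanato norm because the defining supremum runs over all cubes. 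So it suffices to treat $e^{it\sqrt{-\Delta}}$.

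For the first term, $\big\|\cos(t\sqrt{-\Delta})f\big\|_{L^2(w)}\le C\|w\|_{\mathfrak L^{2s+1,p}}^{1/2}\|f\|_{\dot H^s}$ is immediate from \eqref{hop}, using $(n+1)/4\le s<n/2$ and $1<p\le(n+1)/(2s+1)$. For the second term, apply \eqref{hop} with $f$ replaced by $(\sqrt{-\Delta})^{-1}g$ and note $\|(\sqrt{-\Delta})^{-1}g\|_{\dot H^s}=\|g\|_{\dot H^{s-1}}$, which gives the bound $C\|w\|_{\mathfrak L^{2s+1,p}}^{1/2}\|g\|_{\dot H^{s-1}}$. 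For the inhomogeneous term, write $\frac{\sin((t-s)\sqrt{-\Delta})}{\sqrt{-\Delta}}F(s)=\tfrac1{2i}\big(e^{i(t-s)\sqrt{-\Delta}}-e^{-i(t-s)\sqrt{-\Delta}}\big)(\sqrt{-\Delta})^{-1}F(s)$ and apply \eqref{inho} with $F$ replaced by $(\sqrt{-\Delta})^{-1}F$; by the lifting inequality $\|(\sqrt{-\Delta})^{-1}F\|_{L^q_t\dot B_{r,2}^{\widetilde{s}+1}}\le C\|F\|_{L^q_t\dot B_{r,2}^{\widetilde{s}}}$ this is controlled by $C\|w\|_{\mathfrak L^{\alpha,p}}^{1/2}\|F\|_{L^q_t\dot B_{r,2}^{\widetilde{s}}}$ with $\alpha$ as in \eqref{sca}.

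The key point that makes the three bounds combine cleanly is the choice $s=\widetilde{s}-\tfrac1q-\tfrac nr+\tfrac{n+4}2$, which forces $\alpha=2\big(\widetilde{s}-\tfrac1q-\tfrac nr+\tfrac{n+4}2\big)+1=2s+1$, so all three pieces carry the identical weight factor $\|w\|_{\mathfrak L^{2s+1,p}}^{1/2}$; moreover the single hypothesis $1<p\le(n+1)/(2s+1)$ equals $1<p\le(n+1)/\alpha$, which is exactly what \eqref{inho} demands, while $\widetilde{s}>(n-1)/2$ and $1\le q,r\le2$ are assumed. Summing the three estimates yields the stated inequality. There is essentially no analytic obstacle here—the content is bookkeeping—so the only thing to be careful about is precisely this compatibility of the index constraints (one assumption on $p$ must simultaneously feed \eqref{hop} and, through $\alpha=2s+1$, the range in \eqref{inho}) together with the harmless observation that Morrey-Campanato norms are invariant under $t\mapsto-t$, which legitimizes passing from $e^{it\sqrt{-\Delta}}$ to $e^{-it\sqrt{-\Delta}}$.
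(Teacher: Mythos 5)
Your proposal is correct and takes essentially the same route as the paper, which deduces the corollary as ``a simple combination of \eqref{sol}, \eqref{hop} and \eqref{inho}'' after noting that the choice $s=\widetilde{s}-\frac1q-\frac nr+\frac{n+4}2$ forces $\alpha=2s+1$ in \eqref{sca} and invoking the lifting inequality $\|(\sqrt{-\Delta})^{-1}F\|_{\dot B_{r,2}^{\widetilde{s}+1}}\leq C\|F\|_{\dot B_{r,2}^{\widetilde{s}}}$. The only detail you supply beyond what the paper records is the (correct) observation that the $e^{-it\sqrt{-\Delta}}$ pieces are handled by $t\mapsto-t$ together with the reflection invariance of the Morrey--Campanato norm.
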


Let us sketch the organization of the paper. In Section \ref{sec2}, we obtain a property
of the Morrey-Campanato class
regarding the Hardy-Littlewood maximal function, which is to be used for the proof of the weighted $L^2$ estimates in Theorem \ref{thm}.
Then, using bilinear interpolation and a localization argument based on Littlewood-Paley theorem in weighted $L^2$ spaces,
we prove Theorem \ref{thm} in Section \ref{sec3}.
Finally in Section \ref{sec4}, we apply the estimates to the well-posedness theory for wave equations
with potentials.

Throughout this paper, we will use the letter $C$ to denote a constant which may be different
at each occurrence. We also denote by $\widehat{f}$ the Fourier transform of $f$
and by $\langle f,g\rangle$ the usual inner product of $f,g$ on $L^2$.
Given two complex Banach spaces $A_0$ and $A_1$,
we denote by $(A_0,A_1)_{\theta,q}$
the real interpolation spaces for $0<\theta<1$ and $1\leq q\leq\infty$.
In particular, $(A_0,A_1)_{\theta,q}=A_0=A_1$ if $A_0=A_1$.
See ~\cite{BL,T} for details.


\section{Preliminary lemmas}\label{sec2}

In this section we present some preliminary lemmas which will be used for the proof of Theorem \ref{thm}.

A weight $w:\mathbb{R}^n\rightarrow[0,\infty]$ is a locally integrable function
that is allowed to be zero or infinite only on a set of Lebesgue measure zero,
and we denote by $w\in A_2(\mathbb{R}^n)$ to mean that
$w$ is in the Muckenhoupt $A_2(\mathbb{R}^n)$ class
which is defined by
\begin{equation*}
\sup_{Q\text{ cubes in }\mathbb{R}^{n}}
\bigg(\frac1{|Q|}\int_{Q}w(x)dx\bigg)\bigg(\frac1{|Q|}\int_{Q}w(x)^{-1}dx\bigg)<C_{A_2}.
\end{equation*}
Also, $w$ is said to be in the class $A_1$ if there is a constant $C_{A_1}$
such that for almost every $x$
\begin{equation*}
M(w)(x)\leq C_{A_1}w(x),
\end{equation*}
where $M(w)$ is the Hardy-Littlewood maximal function of $w$ given by
$$M(w)(x)=\sup_{Q}\frac1{|Q|}\int_{Q}w(y)dy,$$
where the sup is taken over all cubes $Q$ in $\mathbb{R}^{n}$ with center $x$.
Then,
\begin{equation}\label{bac}
A_1\subset A_2\quad\text{with}\quad C_{A_2}\leq C_{A_1}.
\end{equation}
See, for example, \cite{G} for more details.
Also, the following lemma can be found in Chapter 5 of \cite{St}.
(See also Proposition 2 in \cite{CR}.)

\begin{lem}\label{lem0}
If $M(w)(x)<\infty$ for almost every $x\in\mathbb{R}^n$, then for $0<\delta<1$
\begin{equation*}
(M(w))^\delta\in A_1
\end{equation*}
with $C_{A_1}$ independent of $w$.
\end{lem}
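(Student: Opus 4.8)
The plan is to prove the classical Coifman--Rochberg fact that a sub-unit power of a maximal function is an $A_1$ weight. First I would rewrite the $A_1$ condition $M\big((Mw)^\delta\big)(x)\le C_{A_1}\,(Mw(x))^\delta$ (for a.e.\ $x$) as the family of averaged inequalities
\[
\frac{1}{|Q|}\int_Q (Mw(y))^\delta\,dy\le C(n,\delta)\,(Mw(x))^\delta\qquad\text{for a.e. }x\in Q,
\]
one for each cube $Q$; taking the supremum over cubes $Q$ centered at a fixed point recovers the left-hand side of the $A_1$ inequality. I would also observe that the hypothesis $Mw<\infty$ a.e.\ forces $w\in L^1_{\mathrm{loc}}(\mathbb{R}^n)$, since otherwise $Mw\equiv\infty$; in particular all the local integrals appearing below are finite.

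Now fix a cube $Q$, let $3Q$ be the concentric cube with three times the side length, and split $w=w_1+w_2$ with $w_1=w\chi_{3Q}$ and $w_2=w-w_1$. From $Mw\le Mw_1+Mw_2$ and the elementary bound $(a+b)^\delta\le a^\delta+b^\delta$ for $0<\delta<1$, it is enough to estimate $\frac1{|Q|}\int_Q (Mw_1)^\delta$ and $\frac1{|Q|}\int_Q (Mw_2)^\delta$ separately. For the local piece, $w_1\in L^1(\mathbb{R}^n)$ and Kolmogorov's inequality (a consequence of the weak $(1,1)$ bound for $M$) gives $\int_Q (Mw_1)^\delta\le C_\delta\,|Q|^{1-\delta}\|w_1\|_{L^1}^\delta$, i.e. $\frac1{|Q|}\int_Q(Mw_1)^\delta\le C_\delta\big(\frac1{|Q|}\int_{3Q}w\big)^\delta$; enclosing $3Q$ in a cube centered at an arbitrary $x\in Q$ of comparable measure yields $\frac1{|Q|}\int_{3Q}w\le C_n\,Mw(x)$, so this term is $\le C(n,\delta)(Mw(x))^\delta$ for every $x\in Q$.

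The non-local piece is where the only genuine work lies: I would show that $Mw_2$ is pointwise comparable throughout $Q$. Any cube $R$ centered at some $y\in Q$ with $\int_R w_2>0$ must meet $(3Q)^c$, which forces its side length to satisfy $\ell(R)\ge 2\ell(Q)$; then for any $z\in Q$ the cube $R'$ centered at $z$ with $\ell(R')=2\ell(R)$ contains $R$ and has $|R'|=2^n|R|$, so $\frac1{|R|}\int_R w_2\le 2^n\,Mw_2(z)\le 2^n\,Mw(z)$. Taking the supremum over such $R$ gives $Mw_2(y)\le 2^n\,Mw(z)$ for all $y,z\in Q$, hence $\frac1{|Q|}\int_Q (Mw_2)^\delta\le 2^{n\delta}\,(Mw(x))^\delta$. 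Adding the two contributions produces the displayed averaged inequality with a constant depending only on $n$ and $\delta$ and independent of $w$, which is precisely the assertion $(Mw)^\delta\in A_1$. The main obstacle is just the geometric book-keeping in this last step — fixing the dilation factors and checking the containments $R\subset R'$ together with the comparability of measures — while the rest is a routine combination of Kolmogorov's inequality, subadditivity, and the doubling of Lebesgue measure on cubes.
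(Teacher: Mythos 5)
Your argument is correct and is precisely the classical Coifman--Rochberg proof that the paper itself invokes by citation (Stein, Ch.~5; Coifman--Rochberg, Prop.~2): the split $w=w\chi_{3Q}+w\chi_{(3Q)^c}$, Kolmogorov's inequality via the weak $(1,1)$ bound for the local part, and the pointwise comparability of $Mw_2$ on $Q$ for the far part, yielding a constant depending only on $n$ and $\delta$. Nothing further is needed.
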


Next we obtain the following property of the Morrey-Campanato class
regarding the Hardy-Littlewood maximal function.
A similar property when $\alpha=2$ can be also found in \cite{CS}.

\begin{lem}\label{lem2}
Let $w\in\mathfrak{L}^{\alpha,p}$ be a weight on $\mathbb{R}^{n+1}$,
and let $w_*(x,t)$ be the $n$-dimensional maximal function defined by
$$w_*(x,t)=\sup_{Q'}\Big(\frac{1}{|Q'|}\int_{Q'}w(y,t)^\rho dy\Big)^{\frac{1}{\rho}},\quad\rho>1,$$
where $Q'$ denotes a cube in $\mathbb{R}^n$ with center $x$.
Then, if $p>\rho$ and $p>1/\alpha$, we have
$$\sup_{Q}|Q|^{\frac{\alpha}{n+1}} \Big( \frac{1}{|Q|} \int_Q w_*(x,t)^p dxdt \Big)^{\frac{1}{p}}
\leq C \sup_{Q} |Q|^{\frac{\alpha}{n+1}} \Big( \frac{1}{|Q|} \int_Q w (y,t)^p dydt \Big)^{\frac{1}{p}}.$$
Namely, if $p>\rho$ and $p>1/\alpha$,
$\|w_*\|_{\mathfrak{L}^{\alpha,p}}\leq C\|w\|_{\mathfrak{L}^{\alpha,p}}$.
Furthermore, $w_\ast(\cdot,t)\in A_2(\mathbb{R}^n)$ in the $x$ variable with a constant $C_{A_2}$ uniform in almost every $t\in\mathbb{R}$.
\end{lem}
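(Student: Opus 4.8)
The plan is to prove Lemma~\ref{lem2} in two parts: first the Morrey--Campanato bound $\|w_*\|_{\mathfrak{L}^{\alpha,p}}\leq C\|w\|_{\mathfrak{L}^{\alpha,p}}$, and then the uniform $A_2$ statement in the $x$-variable. For the first part, fix a cube $Q\subset\mathbb{R}^{n+1}$ of side length $\ell$; I would write $Q=Q'\times I$ where $Q'$ is a cube in $\mathbb{R}^n$ and $I\subset\mathbb{R}$ is an interval of length $\ell$. The key idea is that $w_*(x,t)$ for $x\in Q'$ is controlled by a maximal function acting only in $x$, so for each fixed $t$ one can split the relevant cubes in $\mathbb{R}^n$ into those contained in (a dilate of) $Q'$ and the far pieces. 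Concretely, I would dominate $w_*(x,t)\leq M_\rho(w(\cdot,t)\chi_{3Q'})(x) + (\text{tail})$, where $M_\rho g = (M(|g|^\rho))^{1/\rho}$ and the tail is controlled by the Morrey--Campanato growth of $w(\cdot,t)$ over large cubes centered near $Q'$. For the local part, the Fefferman--Stein / Hardy--Littlewood maximal theorem gives, for each fixed $t$, that $M_\rho$ is bounded on $L^p(\mathbb{R}^n)$ precisely because $p>\rho$, so
\begin{equation*}
\int_{Q'}\big(M_\rho(w(\cdot,t)\chi_{3Q'})(x)\big)^p\,dx\leq C\int_{3Q'}w(y,t)^p\,dy.
\end{equation*}
Integrating in $t\in I$, multiplying by $|Q|^{\alpha/(n+1)-1}=\ell^{\alpha-(n+1)}$, and recognizing the right side as $\|w\|_{\mathfrak{L}^{\alpha,p}}^p$ applied to the cube $3Q'\times I$ (after adjusting constants by a dimensional factor and using $|3Q'\times I|\sim|Q|$) closes this piece.

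For the tail, the annular decomposition of $\mathbb{R}^n$ into $2^{k+1}Q'\setminus 2^kQ'$ gives, for $x\in Q'$,
\begin{equation*}
w_*(x,t)^{\rho}\leq C\sum_{k\geq0}\frac{1}{|2^kQ'|}\int_{2^kQ'}w(y,t)^\rho\,dy\leq C\sum_{k\geq0}(2^k\ell)^{-\rho\alpha}\Big(\sup_{Q''}|Q''|^{\alpha/n}\big(\tfrac1{|Q''|}\int_{Q''}w(y,t)^\rho\big)^{1/\rho}\Big)^\rho,
\end{equation*}
and here I need the condition $p\alpha>1$ (equivalently $\rho\alpha>\rho/p$, which is weaker, but the honest requirement to make the geometric series converge is $\alpha>0$, while $p>1/\alpha$ enters to make the $L^p$-average over $Q$ finite and scale correctly). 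The sum over $k$ converges because $\alpha>0$, producing a bound on $w_*(x,t)$ (uniform in $x\in Q'$) of the form $C\ell^{-\alpha}$ times the $\rho$-Morrey norm of $w(\cdot,t)$, which is itself bounded by $\|w\|_{\mathfrak{L}^{\alpha,p}}$ via H\"older since $p>\rho$. Raising to the $p$-th power, integrating over $Q'\times I$ (volume $|Q|$), and multiplying by $\ell^{\alpha-(n+1)}\cdot\ell^{-(n+1)}$... — more carefully, $|Q|^{\alpha/(n+1)}(\frac1{|Q|}\int_Q C\ell^{-\alpha p}\,dxdt)^{1/p}=C\ell^\alpha\cdot\ell^{-\alpha}=C$ times the norm — gives the desired estimate. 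Combining local and tail parts yields $\|w_*\|_{\mathfrak{L}^{\alpha,p}}\leq C\|w\|_{\mathfrak{L}^{\alpha,p}}$.

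For the final assertion, note that $w_*(x,t)=\big(M(w(\cdot,t)^\rho)(x)\big)^{1/\rho}=\big(M(w(\cdot,t)^\rho)(x)\big)^{\delta}$ with $\delta=1/\rho\in(0,1)$ since $\rho>1$. Since $w\in\mathfrak{L}^{\alpha,p}$ implies $w(\cdot,t)\in L^p_{\mathrm{loc}}(\mathbb{R}^n)$ for a.e.\ $t$ (indeed the Morrey norm controls local $L^p$ norms), we have $M(w(\cdot,t)^\rho)(x)<\infty$ for a.e.\ $x$, for a.e.\ $t$. Then Lemma~\ref{lem0} applied in the $x$-variable for each such $t$ gives $w_*(\cdot,t)=(M(w(\cdot,t)^\rho))^{1/\rho}\in A_1(\mathbb{R}^n)$ with $C_{A_1}$ independent of $w$ (hence of $t$), and by \eqref{bac} it lies in $A_2(\mathbb{R}^n)$ with $C_{A_2}\leq C_{A_1}$ uniform in $t$. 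The main obstacle I anticipate is bookkeeping the tail estimate so that the powers of $\ell$ from the Morrey scaling, the geometric sum, and the $L^p$-average over the $(n+1)$-dimensional cube all cancel correctly; in particular one must be careful that $p>1/\alpha$ is exactly what is needed for the tail term's contribution $\ell^{-\alpha}$ to survive the $L^p_{x,t}$ averaging against $|Q|^{\alpha/(n+1)}$ without picking up an uncontrolled power of $\ell$, rather than this being an artifact of a wasteful estimate.
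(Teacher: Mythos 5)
Your overall strategy --- split $w_*$ on a fixed cube $Q=Q'\times I$ into a local piece handled by the Hardy--Littlewood maximal theorem with exponent $p/\rho>1$, plus a dyadic far piece, and prove the $A_2$ statement via Lemma \ref{lem0} with $\delta=1/\rho$ and \eqref{bac} --- is the same as the paper's, and both the local piece and the $A_2$ piece are fine. The gap is in the tail, at the step where you pass from the averages over the specific cubes $2^kQ'$ to the $t$-slice Morrey quantity $N_\rho(w(\cdot,t)):=\sup_{Q''}|Q''|^{\alpha/n}\bigl(\frac{1}{|Q''|}\int_{Q''}w(y,t)^\rho\,dy\bigr)^{1/\rho}$ and then sum the geometric series in $k$ \emph{before} integrating in $t$. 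After that move you need $\bigl(\frac{1}{|I|}\int_I N_\rho(w(\cdot,t))^p\,dt\bigr)^{1/p}\leq C\|w\|_{\mathfrak{L}^{\alpha,p}}$ for every interval $I$, and this is not true: the supremum over $Q''$ sits inside the $t$-integral and may be attained at a $t$-dependent cube whose side length has nothing to do with $|I|$, so the $(n+1)$-dimensional Morrey norm gives no pointwise-in-$t$ control. For instance, take $w(y,t)\approx\lambda\,\chi_{[0,\lambda^{-p}]}(t)$ on a unit spatial ball (suitably cut off at infinity): when $\alpha p>1$ one checks $\|w\|_{\mathfrak{L}^{\alpha,p}}=O(1)$ as $\lambda\to\infty$, yet $N_\rho(w(\cdot,t))\gtrsim\lambda$ for $t\in[0,\lambda^{-p}]$, so with $I=[0,\lambda^{-p}]$ the left side is $\gtrsim\lambda$. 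A symptom of the same loss is that you never actually locate where $p>1/\alpha$ enters: you attribute the convergence of the geometric series to $\alpha>0$, whereas in a correct accounting the series carries the factor $2^{k(1/p-\alpha)}$ and $p>1/\alpha$ is exactly the convergence condition.

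The fix is to keep the $t$-integration coupled to the spatial scale: do not replace the $k$-th average by a supremum, and do not sum in $k$ pointwise. For each fixed $k$, bound the $k$-th contribution to $w_*$ on $Q'\times I$ by H\"older ($p\geq\rho$) in terms of $\bigl(\frac{1}{(2^k\ell)^n}\int_{2^kQ'}w(y,t)^p\,dy\bigr)^{1/p}$, take the $L^p(Q'\times I)$ norm first, and observe that $2^kQ'\times I$ sits inside a cube $Q_k\subset\mathbb{R}^{n+1}$ of side $2^k\ell$; applying the $(n+1)$-dimensional Morrey bound on $Q_k$ gives the $k$-th term of $|Q|^{\alpha/(n+1)}\bigl(\frac1{|Q|}\int_Q\cdot\bigr)^{1/p}$ as $C2^{k(1/p-\alpha)}\|w\|_{\mathfrak{L}^{\alpha,p}}$, the extra $2^{k/p}$ being the price of the thin slab $|I|=\ell\ll2^k\ell$ inside $Q_k$. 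Summing in $k$ then requires precisely $p>1/\alpha$. This is what the paper does, via the decomposition $w=\sum_{k\ge0}w\chi_{R_k}+\phi$ into annular pieces and the triangle inequality in $L^p(Q)$ before the sum over $k$.
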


\begin{proof}
Fix a cube $Q$ in $\mathbb{R}^{n+1}$ with center at $(z,\tau)$ and side length $\delta$.
Let us define the rectangles $R_k$, $k \geq 1$, such that $(y,t) \in R_k$ if $|t-\tau |< 2\delta$
and $y \in \widetilde{Q}(z,2^{k+1} \delta)\setminus \widetilde{Q}(z, 2^k \delta)$.
Here, $\widetilde{Q}(z,r)$ denote a cube in $\mathbb{R}^{n}$ with center $z$ and side length $r$.
Also, let $R_0 = 4Q$.

Now, setting $w^{(k)}= w \chi_{R_k}$ with the characteristic function $\chi_{R_k}$ of the set $R_k$,
we may write
$$w(y,t)=\sum_{k\geq 0}w^{(k)}(y,t)+\phi(y,t)$$
where $\phi(y,t)$ is supported on $\mathbb{R}^{n+1}\setminus\bigcup_{k\geq0}R_k$.
Then it is easy to see that
$$w_*(x,t)\leq\sum_{k \geq 0}w^{(k)}_*(x,t)+\phi_*(x,t)$$
and
$$\Big( \frac{1}{|Q|}\int_Q w_*(x,t)^pdxdt\Big)^{\frac{1}{p}}
\leq \sum_{k \geq 0}\Big(\frac{1}{|Q|}\int_Qw^{(k)}_*(x,t)^p dxdt \Big)^{\frac{1}{p}}
+\Big( \frac{1}{|Q|}\int_Q\phi_* (x,t)^p dxdt \Big)^{\frac{1}{p}}.$$
Since $(x,t)\in Q$, it is clear that $\phi_*(x,t)=0$.
Also, applying the well-known maximal theorem, $\|M(f)\|_q\leq C\|f\|_q$, $q>1$, with $q=p/\rho$,
we see that if $p>\rho$
\begin{align}\label{sso}
\nonumber|Q|^{\frac{\alpha}{n+1}} \Big( \frac{1}{|Q|} \int_Q w^{(0)}_* (x,t)^p dxdt \Big)^{\frac{1}{p}}
&\leq C |Q|^{\frac{\alpha}{n+1}} \Big( \frac{1}{|Q|} \int_{4Q} w (y,t)^p dydt \Big)^{\frac{1}{p}}\\
&\leq C\|w\|_{\mathfrak{L}^{\alpha,p}}.
\end{align}
Consequently, we only need to consider the case where $k\geq1$.

Let $k \geq 1$.
Since $(x,t)\in Q$, it follows that
\begin{align*}
w^{(k)}_*(x,t)
&= \sup_{Q'}\bigg(\frac{1}{|Q'|}\int_{Q'}w(y,t)^\rho\chi_{R_k}(y,t)dy\bigg)^{\frac{1}{\rho}}\\
&\leq C\bigg(\frac{1}{(2^k \delta)^n}\int_{\widetilde{Q}(z,2^{k+1} \delta)\setminus \widetilde{Q}(z, 2^k \delta)} w(y,t)^\rho dy \bigg)^{\frac{1}{\rho}}\\
&\leq C\bigg(\frac{1}{(2^k \delta)^n}
\int_{\widetilde{Q}(z,2^{k+1}\delta)\setminus \widetilde{Q}(z, 2^k \delta)} w(y,t)^pdy\bigg)^{\frac{1}{p}},
\end{align*}
where we used H\"older's inequality for the last inequality since $p\geq\rho$.
Thus,
\begin{align*}
\int_{Q} w^{(k)}_*(x,t)^pdxdt&\leq \frac{C}{(2^k \delta)^n}\int_{|\tau-t|<\delta}
\int_{\widetilde{Q}(z,2^{k+1}\delta)\setminus \widetilde{Q}(z, 2^k \delta)}w(y,t)^p\int_{|z-x|<\delta}1\,dxdydt\\
&\leq\frac{C}{2^{kn}}\int_{R_k}w(y,t)^pdydt.
\end{align*}
Since we can clearly choose a cube $Q_k$ in $\mathbb{R}^{n+1}$
such that $R_k\subset Q_k$ and $|Q_k| \sim (2^k \delta)^{n+1}$, we now get
\begin{align*}
|Q|^{\frac{\alpha}{n+1}} \Big( \frac{1}{|Q|} \int_{Q} w^{(k)}_* (x,t)^p dxdt \Big)^{\frac{1}{p}}
&\leq C|Q|^{\frac{\alpha}{n+1}} \Big( \frac{1}{2^{kn}|Q|}\int_{R_k}w(y,t)^pdydt \Big)^{\frac{1}{p}} \\
&\leq C |Q|^{\frac{\alpha}{n+1}} \Big( \frac{2^k}{|Q_k|} \int_{Q_k} w(y,t)^p dy dt \Big)^{\frac{1}{p}} \\
&\leq C 2^{-\alpha k + \frac{k}{p}} |Q_k|^{\frac{\alpha}{n+1}} \Big( \frac{1}{|Q_k|} \int_{Q_k} w(y,t)^p dy dt \Big)^{\frac{1}{p}}.
\end{align*}
Hence, if $p>1/\alpha$ and $p\geq\rho$, we conclude that
 $$\sum_{k \geq 1}|Q|^{\frac{\alpha}{n+1}}\Big(\frac{1}{|Q|}\int_Qw^{(k)}_*(x,t)^p dxdt \Big)^{\frac{1}{p}}
 \leq C\|w\|_{\mathfrak{L}^{\alpha,p}}.$$
By combining this and \eqref{sso}, if $p>\rho$ and $p>1/\alpha$, we get
$\|w_*\|_{\mathfrak{L}^{\alpha,p}}\leq C\|w\|_{\mathfrak{L}^{\alpha,p}}$
as desired.

Finally, to show the last assertion in the lemma, note first that
$$w_\ast(x,t)=(M(w(\cdot,t)^\rho))^{1/\rho}.$$
Since $w\in\mathfrak{L}^{\alpha,p}$ and $p\geq\rho$, it is not difficult to see that
$M(w(\cdot,t)^\rho)<\infty$ for almost every $x\in\mathbb{R}^n$.
Now, applying Lemma \ref{lem0} with $\delta=1/\rho$,
we conclude that $w_\ast(\cdot,t)\in A_1$ with $C_{A_1}$ uniform in $t\in\mathbb{R}$,
which in turn implies that
$w_\ast(\cdot,t)\in A_2$ with $C_{A_2}$ uniform in $t\in\mathbb{R}$ (see \eqref{bac}).
\end{proof}

\section{Proof of Theorem \ref{thm}}\label{sec3}
Since $w\leq w_\ast$ and
$\|w_*\|_{\mathfrak{L}^{\alpha,p}}\leq C\|w\|_{\mathfrak{L}^{\alpha,p}}$ for $p>\rho>1$ and $p>1/\alpha$
(see Lemma \ref{lem2}),
it is enough to prove Theorem \ref{thm} replacing $w$ with $w_\ast$.
The motivation behind this replacement is that
$w_\ast(\cdot,t)\in A_2(\mathbb{R}^n)$ in the $x$ variable with a constant $C_{A_2}$ uniform in almost every $t\in\mathbb{R}$ (see Lemma \ref{lem2}).
This $A_2$ condition will enable us to use a localization argument in weighted $L^2$ spaces.
Consequently, we may assume that $w$ satisfies the same $A_2$ condition,
in proving the theorem.

\subsection{Homogeneous estimates}
First we prove the homogeneous estimate \eqref{hop}.
Let $\phi$ be a smooth function supported in $(1/2,2)$ such that
$$\sum_{k=-\infty}^\infty \phi(2^k t)=1,\quad t>0.$$
For $k\in\mathbb{Z}$, we define the multiplier operators $P_kf$ by
$$\widehat{P_kf}(\xi)=\phi(2^{-k}|\xi|)\widehat{f}(\xi).$$
First we claim that if $(n+1)/4\leq s<n/2$ and $1<p\leq(n+1)/(2s+1)$
\begin{equation}\label{freq}
\big\|e^{it\sqrt{-\Delta}}P_0f\big\|_{L^2(w(x,t))}
\leq C\|w\|_{\mathfrak{L}^{2s+1,p}}^{1/2}\|f\|_2.
\end{equation}
Then it follows from scaling that
\begin{align*}
\big\|e^{it\sqrt{-\Delta}}P_k f\big\|_{L^2(w(x,t))}^2
&\leq C2^{-kn}2^{-k}\big\|e^{it\sqrt{-\Delta}}P_0(f(2^{-k}\cdot))\big\|_{L^2(w(2^{-k}x,2^{-k}t))}^2\\
&\leq C2^{-kn}2^{-k}\|w(2^{-k}x,2^{-k}t)\|_{\mathfrak{L}^{2s+1,p}}\|f(2^{-k}\cdot)\|_2^2\\
&\leq C2^{2ks}\|w\|_{\mathfrak{L}^{2s+1,p}}\|f\|_2^2.
\end{align*}
Since $w(\cdot,t)\in A_2(\mathbb{R}^n)$ uniformly for almost every $t \in \mathbb{R}$,
by the Littlewood-Paley theorem on weighted $L^2$ spaces (see Theorem 1 in \cite{Ku}), we see that
\begin{align*}
\big\|e^{it\sqrt{-\Delta}}f\big\|_{L^2(w(x,t))}^2
&=\int\big\|e^{it\sqrt{-\Delta}}f\big\|_{L^2(w(\cdot,t))}^2dt\\
&\leq C\int\bigg\|\bigg(\sum_k\big|P_ke^{it\sqrt{-\Delta}}f\big|^2\bigg)^{1/2}\bigg\|_{L^2(w(\cdot,t))}^2dt\\
&=C\sum_k\big\|e^{it\sqrt{-\Delta}}P_kf\big\|_{L^2(w(x,t))}^2.
\end{align*}
On the other hand, since $P_kP_jf=0$ if $|j-k|\geq2$, it follows that
\begin{align*}
\sum_k\big\|e^{it\sqrt{-\Delta}}P_kf\big\|_{L^2(w(x,t))}^2
&=\sum_k\big\|e^{it\sqrt{-\Delta}}P_k\big(\sum_{|j-k|\leq1}P_jf\big)\big\|_{L^2(w(x,t))}^2\\
&\leq C\|w\|_{\mathfrak{L}^{2s+1,p}}\sum_k 2^{2ks}\big\|\sum_{|j-k|\leq1}P_jf\big\|_2^2\\
&\leq C\|w\|_{\mathfrak{L}^{2s+1,p}}\|f\|_{\dot{H}^s}^2.
\end{align*}
Consequently, we get the desired estimate
$$\big\|e^{it\sqrt{-\Delta}}f\big\|_{L^2(w(x,t))}
\leq C\|w\|_{\mathfrak{L}^{2s+1,p}}^{1/2}\|f\|_{\dot{H}^s}$$
for $(n+1)/4\leq s<n/2$ and $1<p\leq(n+1)/(2s+1)$.

Now it remains to show \eqref{freq}.
By duality \eqref{freq} is equivalent to
\begin{equation*}
\bigg\|\int e^{-is\sqrt{-\Delta}}P_0F(\cdot,s)ds\bigg\|_{L_x^2}
\leq C\|w\|_{\mathfrak{L}^{2s+1,p}}^{1/2}\|F\|_{L^2(w^{-1})}.
\end{equation*}
Hence it suffices to show the following bilinear form estimate
\begin{equation*}
\bigg|\bigg\langle\int_{\mathbb{R}}e^{i(t-s)\sqrt{-\Delta}}P_0^2F(\cdot,s)ds,G(x,t)\bigg\rangle\bigg|
\leq C\|w\|_{\mathfrak{L}^{2s+1,p}}\|F\|_{L^2(w^{-1})}\|G\|_{L^2(w^{-1})}.
\end{equation*}
To show this, we first write
$$\int_{\mathbb{R}}e^{i(t-s)\sqrt{-\Delta}}P_0^2F(\cdot,s)ds=K\ast F,$$
where
$$K (x,t)=\int_{\mathbb{R}^n}e^{i(x\cdot\xi+t|\xi|)}\phi(|\xi|)^2d\xi.$$
Next, we decompose the kernel $K$ in the following way
$$\big| \big\langle K \ast F, G \big\rangle \big|
\leq \sum_{j\geq0} \big| \big\langle (\psi_j K)\ast F,G\big\rangle \big|,$$
where $\psi_j:\mathbb{R}^{n+1}\rightarrow[0,1]$ is a smooth function which is supported in $B(0,2^j) \setminus B(0,2^{j-2})$ for $j\geq1$ and in $B(0,1)$ for $j=0$,
such that $\sum_{j\geq0}\psi_j=1$.
Then it suffices to show that
\begin{equation}\label{345}
\sum_{j\geq0}\big| \big\langle (\psi_j K)\ast F,G\big\rangle \big|
\leq C\|w\|_{\mathfrak{L}^{2s+1,p}}\|F\|_{L^2(w^{-1})}\|G\|_{L^2(w^{-1})}.
\end{equation}
For this, we assume for the moment that
\begin{align}\label{21}
\big| \big\langle (\psi_j K) \ast F, G \big\rangle \big|
&\leq C2^{j (\frac{n+3}{2} -(2s+1)p)}\|w\|_{\mathfrak{L}^{2s+1,p}}^p \|F\|_{L^2(w^{-p})}\|G\|_{L^2(w^{-p})},\\
\label{22}\big| \big\langle (\psi_j K) \ast F, G \big\rangle \big|
&\leq C2^{j (\frac{n+3}{2} -\frac{2s+1}{2}p)}\|w\|_{\mathfrak{L}^{2s+1,p}}^{p/2}\|F\|_{L^2(w^{-p})}\|G\|_2,\\
\label{23}\big| \big\langle (\psi_j K) \ast F, G \big\rangle \big|
&\leq C2^{j (\frac{n+3}{2} -\frac{2s+1}{2}p)}\|w\|_{\mathfrak{L}^{2s+1,p}}^{p/2}\|F\|_2\|G\|_{L^2(w^{-p})},
\end{align}
and use the following bilinear interpolation lemma (see \cite{BL}, Section 3.13, Exercise 5(b))
as in \cite{KT,BBCRV}.

\begin{lem}\label{lem}
For $i=0,1$, let $A_i,B_i,C_i$ be Banach spaces and let $T$ be a bilinear operator such that
$T:A_0\times B_0\rightarrow C_0$,
$T:A_0\times B_1\rightarrow C_1$, and
$T:A_1\times B_0\rightarrow C_1$.
Then one has for $\theta=\theta_0+\theta_1$ and $1/q+1/r\geq1$
$$T:(A_0,A_1)_{\theta_0,q}\times(B_0,B_1)_{\theta_1,r}\rightarrow(C_0,C_1)_{\theta,1}.$$
Here\, $0<\theta_i<\theta<1$ and $1\leq q,r\leq\infty$.
\end{lem}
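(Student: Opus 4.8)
The plan is to prove Lemma~\ref{lem} by pairing the $J$-method description of the domain spaces with the $K$-functional description of the target space, which collapses the bilinear estimate onto a single scalar convolution inequality. Recall that for $0<\theta<1$ and $1\le q\le\infty$ every $a\in(A_0,A_1)_{\theta_0,q}$ admits a $J$-representation $a=\int_0^\infty u(s)\,\frac{ds}{s}$ (converging in $A_0+A_1$) with $u(s)\in A_0\cap A_1$ and $\big(\int_0^\infty(s^{-\theta_0}J(s,u(s)))^q\,\frac{ds}{s}\big)^{1/q}\le C\,\|a\|_{(A_0,A_1)_{\theta_0,q}}$, where $J(s,u)=\max(\|u\|_{A_0},s\|u\|_{A_1})$; likewise $b=\int_0^\infty v(t)\,\frac{dt}{t}$ with the analogous bound in terms of $\theta_1,r$. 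On the target side one has $\|c\|_{(C_0,C_1)_{\theta,1}}\sim\int_0^\infty\lambda^{-\theta}K(\lambda,c)\,\frac{d\lambda}{\lambda}$, where $K(\lambda,c)=\inf_{c=c_0+c_1}(\|c_0\|_{C_0}+\lambda\|c_1\|_{C_1})$.

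I would then write $T(a,b)=\int_0^\infty\!\int_0^\infty T(u(s),v(t))\,\frac{ds}{s}\,\frac{dt}{t}$ and, using subadditivity of $c\mapsto K(\lambda,c)$, reduce matters to estimating $\iint K(\lambda,T(u(s),v(t)))\,\frac{ds}{s}\,\frac{dt}{t}$. For a single pair $(s,t)$ the three hypotheses give $\|T(u(s),v(t))\|_{C_0}\le C\,\|u(s)\|_{A_0}\|v(t)\|_{B_0}\le C\,J(s,u(s))J(t,v(t))$ and, using whichever of $T:A_0\times B_1\to C_1$ and $T:A_1\times B_0\to C_1$ is more favorable, $\|T(u(s),v(t))\|_{C_1}\le C\min(s^{-1},t^{-1})\,J(s,u(s))J(t,v(t))$; since $K(\lambda,c)\le\min(\|c\|_{C_0},\lambda\|c\|_{C_1})$, this yields $K(\lambda,T(u(s),v(t)))\le C\,J(s,u(s))J(t,v(t))\,\min(1,\lambda/\max(s,t))$. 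A direct computation gives $\int_0^\infty\lambda^{-\theta}\min(1,\lambda/\mu)\,\frac{d\lambda}{\lambda}=\big(\tfrac1\theta+\tfrac1{1-\theta}\big)\mu^{-\theta}$, which is finite precisely because $0<\theta<1$, so carrying out the $\lambda$-integration reduces everything to the scalar inequality
\[
\iint J(s,u(s))\,J(t,v(t))\,\max(s,t)^{-\theta}\,\frac{ds}{s}\,\frac{dt}{t}\le C\Big(\int(s^{-\theta_0}J(s,u(s)))^q\,\frac{ds}{s}\Big)^{1/q}\Big(\int(t^{-\theta_1}J(t,v(t)))^r\,\frac{dt}{t}\Big)^{1/r}.
\]

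To finish I would set $f(s)=s^{-\theta_0}J(s,u(s))$ and $g(t)=t^{-\theta_1}J(t,v(t))$; since $\theta=\theta_0+\theta_1$, the kernel factors as $s^{\theta_0}t^{\theta_1}\max(s,t)^{-\theta}=\kappa(s/t)$ with $\kappa(x)=x^{\theta_0}$ for $x\le1$ and $\kappa(x)=x^{-\theta_1}$ for $x\ge1$, so after passing to the logarithmic variables $\sigma=\log s$, $\tau=\log t$ the left-hand side becomes $\int F(\sigma)\,(h*G)(\sigma)\,d\sigma$, an honest convolution with $h(u)=\kappa(e^u)$ decaying exponentially as $u\to\pm\infty$ (here the positivity of $\theta_0,\theta_1$ enters), hence $h\in L^a(\mathbb{R})$ for every $1\le a\le\infty$. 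H\"older's inequality followed by Young's inequality then bounds this by $\|F\|_{L^q}\|h\|_{L^a}\|G\|_{L^r}$, where Young's inequality forces $\tfrac1a=2-\tfrac1q-\tfrac1r$; the requirement $a\ge1$ is exactly the hypothesis $\tfrac1q+\tfrac1r\ge1$, while $a\le\infty$ is automatic. Taking the infimum over the $J$-representations of $a$ and $b$ and invoking the equivalence of the $J$- and $K$-methods yields $\|T(a,b)\|_{(C_0,C_1)_{\theta,1}}\le C\,\|a\|_{(A_0,A_1)_{\theta_0,q}}\|b\|_{(B_0,B_1)_{\theta_1,r}}$.

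The decisive step, and the one I expect to be the main obstacle if handled carelessly, is to use the $K$-functional rather than the $J$-functional on the $C$-scale: the $J$-functional would produce the divergent factor $\max(1,\lambda/\max(s,t))$ and a kernel that fails the Schur/Young test, whereas the $K$-functional produces the integrable $\min(1,\lambda/\max(s,t))$, makes the $\lambda$-integration output the clean weight $\max(s,t)^{-\theta}$, and leaves a kernel whose logarithmic form is a genuine convolution, so that the single hypothesis $1/q+1/r\ge1$ is precisely the exponent condition consumed by Young's inequality. The remaining technical point is to justify the manipulations of the double integral $\iint T(u(s),v(t))\,\frac{ds}{s}\,\frac{dt}{t}$ in $C_0+C_1$, which is done in the standard way by first proving the estimate for $a\in A_0\cap A_1$ and $b\in B_0\cap B_1$, whose $J$-representations can be taken compactly supported so that everything above converges absolutely, and then passing to the general case by density.
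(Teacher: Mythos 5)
Your proof is correct, and it is worth noting that the paper itself offers no proof of this lemma at all: it simply cites Bergh--L\"ofstr\"om, Section 3.13, Exercise 5(b). What you have written is essentially the intended solution of that exercise, namely the Lions--Peetre pairing of the $J$-method on the domain scales with the $K$-method on the target scale. All the key computations check out: the three hypotheses really do give $\|T(u,v)\|_{C_0}\leq CJ(s,u)J(t,v)$ and $\|T(u,v)\|_{C_1}\leq C\min(s^{-1},t^{-1})J(s,u)J(t,v)$ (the absence of a hypothesis on $A_1\times B_1$ is exactly what forces the $\min$, hence the $\max(s,t)$), the $\lambda$-integration produces $\bigl(\tfrac1\theta+\tfrac1{1-\theta}\bigr)\max(s,t)^{-\theta}$, the kernel $s^{\theta_0}t^{\theta_1}\max(s,t)^{-\theta}$ does become a two-sided exponentially decaying convolution kernel in logarithmic variables precisely because $0<\theta_0,\theta_1$ and $\theta_0+\theta_1=\theta$, and H\"older plus Young forces $\tfrac1a=2-\tfrac1q-\tfrac1r\leq1$, which is exactly $\tfrac1q+\tfrac1r\geq1$. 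Your closing remark correctly identifies why the target must be measured by $K$ rather than $J$, and why the output lands in the $\ell^1$-type space $(C_0,C_1)_{\theta,1}$. The only soft spot is the final density step: when $q=\infty$ or $r=\infty$ (allowed by the hypotheses, e.g.\ $q=1$, $r=\infty$), $B_0\cap B_1$ need not be dense in $(B_0,B_1)_{\theta_1,\infty}$, so one should instead run the estimate directly on a $J$-representation of a general element and justify $T(a,b)=\sum_{j,k}T(u_j,v_k)$ via the boundedness of $T(u,\cdot):B_0+B_1\to C_0+C_1$ for fixed $u\in A_0\cap A_1$; this is routine, and irrelevant to the paper's application where $q=r=2$.
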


Indeed, let us first define the bilinear vector-valued operator $T$ by
\begin{equation}\label{55}
T(F,G)=\big\{\big\langle (\psi_j K)\ast F,G\big\rangle\big\}_{j\geq0}.
\end{equation}
Then \eqref{345} is equivalent to
\begin{equation}\label{bii}
T:L^2(w^{-1})\times L^2(w^{-1})\rightarrow\ell_1^0(\mathbb{C})
\end{equation}
with the operator norm  $C\|w\|_{\mathfrak{L}^{2s+1,p}}$.
Here, for $a\in\mathbb{R}$ and $1\leq p\leq\infty$,
$\ell^a_p(\mathbb{C})$ denotes the weighted sequence space with the norm
$$\|\{x_j\}_{j\geq0} \|_{\ell^a_p} =
\begin{cases}
\big(\sum_{j\geq0}2^{jap}|x_j|^p\big)^{\frac{1}{p}},
\quad\text{if}\quad p\neq\infty,\\
\,\sup_{j\geq0}2^{ja}|x_j|,
\quad\text{if}\quad p=\infty.
\end{cases}$$
Note that the above three estimates \eqref{21}, \eqref{22} and \eqref{23} become
\begin{align}
\nonumber\|T(F,G)\|_{\ell_\infty^{\beta_0}(\mathbb{C})}
&\leq C\|w\|_{\mathfrak{L}^{2s+1,p}}^p \|F\|_{L^2(w^{-p})}\|G\|_{L^2(w^{-p})},\\
\label{222}\|T(F,G)\|_{\ell_\infty^{\beta_1}(\mathbb{C})}
&\leq C\|w\|_{\mathfrak{L}^{2s+1,p}}^{p/2}\|F\|_{L^2(w^{-p})}\|G\|_2,\\
\label{233}\|T(F,G)\|_{\ell_\infty^{\beta_1}(\mathbb{C})}
&\leq C\|w\|_{\mathfrak{L}^{2s+1,p}}^{p/2}\|F\|_2\|G\|_{L^2(w^{-p})},
\end{align}
respectively, with $\beta_0=-(\frac{n+3}{2} -(2s+1)p)$
and $\beta_1=-(\frac{n+3}{2}-\frac{2s+1}{2}p)$.
Then, applying Lemma \ref{lem} with $\theta_0=\theta_1=1/p^\prime$ and $q=r=2$, we get
for $1<p<2$
$$T:(L^2(w^{-p}), L^2)_{1/p',2}\times(L^2(w^{-p}), L^2)_{1/p',2}
\rightarrow(\ell^{\beta_0}_{\infty}(\mathbb{C}),\ell^{\beta_1}_{\infty}(\mathbb{C}))_{2/p',1}$$
with the operator norm  $C\|w\|_{\mathfrak{L}^{2s+1,p}}$.
Now, we use the following real interpolation space identities (see Theorems 5.4.1 and 5.6.1 in \cite{BL}):

\begin{lem}\label{id}
Let $0<\theta<1$. Then one has
$$( L^{2} (w_0), L^{2} (w_1) )_{\theta,2} = L^2(w),\quad w= w_0^{1-\theta} w_1^{\theta},$$
and for $1\leq q_0,q_1,q\leq\infty$ and $s_0\neq s_1$,
$$(\ell^{s_0}_{q_0}, \ell^{s_1}_{q_1} )_{\theta, q}=\ell^s_q,\quad s= (1-\theta)s_0 + \theta s_1.$$
\end{lem}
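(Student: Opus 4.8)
The plan is to establish both identities by computing the relevant $K$-functionals explicitly; both are classical. The second is precisely Theorem 5.6.1 of \cite{BL}, and the first is the $p_0=p_1=2$ case of the Stein--Weiss change-of-measure theorem, Theorem 5.4.1 of \cite{BL}. Since in the first case the spaces are Hilbert spaces, one could instead observe that $(L^2(w_0),L^2(w_1))_{\theta,2}$ coincides with the complex interpolation space $[L^2(w_0),L^2(w_1)]_\theta$, but the $K$-functional computation is just as short, so I would carry it out directly.

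For the weighted $L^2$ identity, fix $f$ and use the equivalent ``$\ell^2$''-type gluing $K_2(t,f)=\inf_{f=f_0+f_1}\bigl(\|f_0\|_{L^2(w_0)}^2+t^2\|f_1\|_{L^2(w_1)}^2\bigr)^{1/2}$, which is comparable to the usual $K$-functional. Because the two norms are integrals of squares, $K_2$ can be minimized \emph{pointwise}: for fixed value $f(x)$ the quantity $w_0(x)|a|^2+t^2w_1(x)|b|^2$ over $a+b=f(x)$ attains its minimum $\frac{t^2w_0(x)w_1(x)}{w_0(x)+t^2w_1(x)}|f(x)|^2$, so that
$$K_2(t,f)^2=\int\frac{t^2w_0(x)w_1(x)}{w_0(x)+t^2w_1(x)}|f(x)|^2\,dx.$$
Then I would write $\|f\|_{(L^2(w_0),L^2(w_1))_{\theta,2}}^2\sim\int_0^\infty\bigl(t^{-\theta}K_2(t,f)\bigr)^2\frac{dt}{t}$, interchange the two integrations by Tonelli, and in the inner integral substitute $u=t^2w_1(x)/w_0(x)$; the $t$-integral then collapses to $\tfrac12\,w_0(x)^{1-\theta}w_1(x)^{\theta}\int_0^\infty\frac{u^{-\theta}}{1+u}\,du$, a finite constant depending only on $\theta$ since $0<\theta<1$. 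Integrating in $x$ yields $\|f\|_{(L^2(w_0),L^2(w_1))_{\theta,2}}\sim\|f\|_{L^2(w)}$ with $w=w_0^{1-\theta}w_1^{\theta}$.

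For the sequence-space identity I would again compute the $K$-functional of $(\ell^{s_0}_{q_0},\ell^{s_1}_{q_1})$. When $q_0=q_1=\infty$, splitting each coordinate into the term carrying the smaller of the two weights $2^{js_0}$ and $t2^{js_1}$ gives $K(t,x)\sim\sup_j\min(2^{js_0},t2^{js_1})|x_j|$; the general case $1\le q_0,q_1\le\infty$ follows by the same bookkeeping with $\ell_q$-sums in place of suprema (or by reiteration from the endpoints). The decisive hypothesis is $s_0\neq s_1$: as functions of $j$ the factors $2^{js_0}$ and $t2^{js_1}$ cross exactly once, near $j\approx(\log_2 t)/(s_1-s_0)$, so $\min(2^{js_0},t2^{js_1})$ is comparable to $2^{js}$ at that scale and decays geometrically away from it. Inserting this into $\int_0^\infty(t^{-\theta}K(t,x))^q\frac{dt}{t}$ and summing the geometric tails via a discrete Hardy (or Young) inequality produces exactly $\bigl(\sum_j 2^{jsq}|x_j|^q\bigr)^{1/q}=\|x\|_{\ell^s_q}$ with $s=(1-\theta)s_0+\theta s_1$; for $q=\infty$ one replaces the integral by a supremum throughout.

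I expect the only genuine obstacle to be the bookkeeping in the sequence-space case when $q_0\neq q_1$, where the $K$-functional is not literally separable coordinate-by-coordinate; in a final write-up I would record the two $\ell_\infty$-endpoint estimates explicitly and conclude by reiteration, or simply cite Theorem 5.6.1 of \cite{BL} for this part. The weighted $L^2$ identity, by contrast, is entirely routine once the pointwise minimization of $K_2$ is observed.
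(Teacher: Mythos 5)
Your proposal is correct and matches the paper, which does not prove this lemma at all but simply cites Theorems 5.4.1 and 5.6.1 of \cite{BL} --- exactly the two results you identify (the Stein--Weiss change-of-measure theorem with $p_0=p_1=2$ and the standard interpolation of weighted sequence spaces with $s_0\neq s_1$). Your additional $K$-functional computations (the pointwise minimization giving $K_2(t,f)^2=\int\frac{t^2w_0w_1}{w_0+t^2w_1}|f|^2\,dx$ and the crossing-point analysis of $\min(2^{js_0},t2^{js_1})$) are the standard proofs of those cited theorems and are carried out correctly, with the one genuinely delicate point (the case $q_0\neq q_1$) appropriately flagged and resolved by reiteration or citation.
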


Then, for $1<p<2$, we have
$$(L^2(w^{-p}), L^2)_{1/p',2}=L^2(w^{-1}),$$
and
$$(\ell^{\beta_0}_{\infty}(\mathbb{C}),\ell^{\beta_1}_{\infty}(\mathbb{C}))_{2/p',1}
=\ell_{1}^{0}(\mathbb{C})$$
if $(1-\frac2{p'})\beta_0+\frac2{p'}\beta_1=0$ (i.e., $s=(n+1)/4$).
Hence, we get \eqref{bii} if $s=(n+1)/4$ and $1<p\leq\frac{n+1}{2s+1}\,(<2)$.
When $s>(n+1)/4$, note that $\gamma:=\frac{p'}2(2s+1-\frac{n+3}2)>0$.
Since $j\geq0$ and $\beta_1<0$, the estimates \eqref{222} and \eqref{233} are trivially satisfied
for $\beta_1$ replaced by $\beta_1-\gamma$.
Hence, by the same argument we only need to check that $(1-\frac2{p'})\beta_0+\frac2{p'}(\beta_1-\gamma)=0$.
But, this is an easy computation.
Consequently we get \eqref{bii} if $(n+1)/4\leq s<n/2$ and $1<p\leq\frac{n+1}{2s+1}$,
and so the proof is completed.

\smallskip

Finally, we have to show the estimates \eqref{21}, \eqref{22} and \eqref{23}.
For $j\geq0$, let $\{Q_\lambda\}_{\lambda\in 2^j\mathbb{Z}^{n+1}}$ be a collection of
cubes $Q_\lambda\subset\mathbb{R}^{n+1}$ centered at $\lambda$ with side length $2^j$.
Then by disjointness of cubes, we see that
\begin{align*}
\big|\big\langle(\psi_j K)\ast F,G\big\rangle\big|
&\leq\sum_{\lambda,\mu\in 2^j \mathbb{Z}^{n+1}}\big|\big\langle(\psi_j K)\ast(F\chi_{Q_\lambda}),G \chi_{Q_\mu} \big\rangle\big|\\
&\leq \sum_{\lambda\in 2^j \mathbb{Z}^{n+1}} \big| \big\langle (\psi_j K)\ast
(F\chi_{Q_\lambda}),G\chi_{\widetilde{Q}_\lambda}\big\rangle\big|,
\end{align*}
where $\widetilde{Q}_\lambda$ denotes the cube with side length $2^{j+2}$ and the same center as $Q_\lambda$.
By Young's and Cauchy-Schwartz inequalities, it follows that
\begin{align}\label{33}
\big| \big\langle (\psi_j K) \ast F, G \big\rangle \big|
\nonumber&\leq \sum_{\lambda\in 2^j \mathbb{Z}^{n+1}}\|(\psi_j K)\ast(F\chi_{Q_\lambda})\|_{\infty}
\|G\chi_{\widetilde{Q}_\lambda}\|_1\\
\nonumber&\leq\sum_{\lambda\in 2^j \mathbb{Z}^{n+1}}\|\psi_jK\|_{\infty}\|F\chi_{Q_\lambda}\|_1
\|G\chi_{\widetilde{Q}_\lambda}\|_1\\
&\leq\|\psi_j K\|_{\infty}
\Big(\sum_{\lambda\in 2^j \mathbb{Z}^{n+1}}\|F\chi_{Q_\lambda}\|_1^2 \Big)^{\frac{1}{2}}
\Big(\sum_{\lambda\in2^j\mathbb{Z}^{n+1}}\|G\chi_{\widetilde{Q}_\lambda}\|_1^2\Big)^{\frac{1}{2}}.
\end{align}
Now we need to bound the terms
$$\| \psi_j K \|_{\infty},
\quad\sum_{\lambda\in 2^j \mathbb{Z}^{n+1}} \|F\chi_{Q_\lambda}\|_1^2,
\quad\sum_{\lambda\in 2^j \mathbb{Z}^{n+1}} \|G\chi_{\widetilde{Q}_\lambda}\|_1^2.$$

For the first term we use the following well-known lemma, Lemma \ref{25},
which is essentially due to Littman \cite{L}. (See also \cite{St}, VIII, Section 5, B.)
Indeed, by applying the lemma with $\psi(\xi)=|\xi|$, it follows that
$$|K(x,t)|=\bigg|\int_{\mathbb{R}^n} e^{i(x\cdot\xi + t|\xi|)} \phi(|\xi|)^2 d\xi\bigg|
\leq C(1+ |(x,t)| )^{-\frac{n-1}{2}},$$
since the Hessian matrix $H\psi$
has $n-1$ non-zero eigenvalues for each $\xi \in \{\xi\in\mathbb{R}^n: |\xi|\sim 1 \}$.
Thus we get
\begin{equation}\label{stationary_phase}
\|\psi_j K \|_{\infty}\leq C2^{-j \frac{n-1}{2} }.
\end{equation}

\begin{lem}\label{25}
Let $H \psi$ be the Hessian matrix given by $(\frac{\partial^2\psi}{\partial\xi_i\partial\xi_j})$.
Suppose that $\phi$ is a compactly supported smooth function on $\mathbb{R}^n$
and $\psi$ is a smooth function which satisfies rank $H \psi \geq k$ on the support of $\phi$.
Then, for $(x,t)\in \mathbb{R}^{n+1}$
$$\bigg|\int e^{i(x\cdot\xi+t\psi(\xi))}\phi(\xi)d\xi\bigg|
\leq C(1+|(x,t)|)^{-\frac{k}{2}}.$$
\end{lem}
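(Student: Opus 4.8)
The statement to prove is Lemma \ref{25}, the stationary phase / van der Corput type estimate for oscillatory integrals with a phase whose Hessian has rank at least $k$. Here is my plan.

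\textbf{Overall approach.} The plan is to reduce to the classical stationary phase estimate by a partition-of-unity argument that separates the region where the gradient of the total phase is large (where one integrates by parts) from the region where it is small (where the rank condition on $H\psi$ forces a genuine stationary-phase decay of order $(1+|(x,t)|)^{-k/2}$). Since $\phi$ has compact support, everything is local, and the main content is a normal-form reduction near a critical point.

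\textbf{Key steps.} First I would dispose of the trivial bound: the integral is always $\leq \|\phi\|_{L^1}$, so it suffices to prove the estimate when $|(x,t)| = R$ is large, and in fact we may absorb the constant $1$ and prove decay $\lesssim R^{-k/2}$ for $R \geq 1$. Second, write $\lambda = |(x,t)|$ and set $x = \lambda\omega'$, $t = \lambda\omega''$ with $(\omega',\omega'')$ on the unit sphere; then the integral is $\int e^{i\lambda(\omega'\cdot\xi + \omega''\psi(\xi))}\phi(\xi)\,d\xi$, an oscillatory integral with large parameter $\lambda$ and phase $\Phi_\omega(\xi) = \omega'\cdot\xi + \omega''\psi(\xi)$. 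Third, cover the support of $\phi$ by two pieces using a smooth cutoff: on the set where $|\nabla_\xi \Phi_\omega| \gtrsim \lambda^{-1/2}$ (say), non-stationary phase / repeated integration by parts gives rapid decay $O(\lambda^{-N})$, which is better than $\lambda^{-k/2}$; on the complementary set where $|\nabla_\xi\Phi_\omega|$ is small, I am near the critical set of $\Phi_\omega$. Fourth, on that small piece, note that the Hessian of $\Phi_\omega$ in $\xi$ is $\omega'' H\psi(\xi)$; when $\omega'' \neq 0$ this has rank $\geq k$, and when $\omega''$ is near $0$ the gradient $\nabla\Phi_\omega \approx \omega'$ cannot be small (since $|\omega'|^2 + |\omega''|^2 = 1$), so the critical region is empty there — hence we may assume $\omega''$ bounded away from $0$. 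Then, after a smooth change of variables (splitting $\xi$-coordinates into the $k$ directions spanning the nondegenerate part of the Hessian and the remaining $n-k$ directions, via the implicit function theorem / Morse lemma with parameters), the phase becomes, up to an affine term and in the nondegenerate block, a nondegenerate quadratic form in $k$ variables with the other $n-k$ variables as harmless parameters. Fifth, apply the standard stationary phase asymptotic (or just the classical $\lambda^{-k/2}$ bound for a nondegenerate $k$-dimensional Gaussian-type oscillatory integral, e.g.\ Stein's book, Chapter VIII) in the $k$ variables and integrate trivially in the remaining $n-k$ variables and over the compact parameter range of $\omega''$, to obtain $\lesssim \lambda^{-k/2}$ uniformly in $\omega$.

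\textbf{Main obstacle.} The delicate point is the uniformity of the constant over all directions $\omega = (\omega',\omega'')$ and over the location of the (possibly non-isolated, since only rank $\geq k$, not $= n$) critical set. This requires a careful, direction-uniform normal-form reduction: one must choose the splitting of coordinates locally, patch finitely many such charts using the compactness of $\operatorname{supp}\phi$ and of the sphere, and check that the implicit-function-theorem construction and the resulting bounds depend only on fixed $C^\infty$ norms of $\psi$ and $\phi$ on $\operatorname{supp}\phi$. Once that bookkeeping is in place the estimate follows; the oscillatory-integral input itself (van der Corput in higher dimensions for nondegenerate Hessian) is entirely classical and I would simply cite it. I would present this as a sketch and refer to Littman \cite{L} and \cite{St} for the full details, since the lemma is standard.
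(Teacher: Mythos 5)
The paper itself offers no proof of this lemma: it is quoted as a classical result ``essentially due to Littman,'' with pointers to \cite{L} and to \cite{St}, Ch.~VIII, \S5. So you are supplying an argument where the authors supply only a citation. Your overall architecture is the standard one and is sound in outline: pass to polar form $(x,t)=\lambda\omega$ with $\omega=(\omega',\omega'')$; for $|\omega''|$ small observe that $\nabla_\xi\Phi_\omega=\omega'+\omega''\nabla\psi(\xi)$ is bounded below on $\operatorname{supp}\phi$, so non-stationary phase gives rapid decay; for $|\omega''|$ bounded away from zero use that $\omega''H\psi$ has a $k\times k$ minor uniformly invertible on finitely many charts covering $\operatorname{supp}\phi$, and run stationary phase in those $k$ variables with the rest as parameters.

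There is, however, one step that is wrong as written: the claim that on the region where $|\nabla_\xi\Phi_\omega|\gtrsim\lambda^{-1/2}$ repeated integration by parts yields $O(\lambda^{-N})$ for every $N$. It does not. A smooth cutoff adapted to that region has gradient of size $\lambda^{1/2}$, and differentiating $\nabla\Phi_\omega/|\nabla\Phi_\omega|^2$ produces a factor of order $|H\Phi_\omega|/|\nabla\Phi_\omega|^2\lesssim\lambda$; either way each integration by parts contributes $\lambda^{-1}\cdot\lambda=O(1)$, so you gain nothing at that threshold. Rapid decay is only available where $|\nabla\Phi_\omega|\geq\delta$ for a fixed $\delta>0$ independent of $\lambda$. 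The standard repairs are: (a) decompose dyadically in the size $\rho$ of $|\nabla\Phi_\omega|$ between $\lambda^{-1/2}$ and $\delta$ (within the $k$ nondegenerate variables), where each shell contributes $O\big((\lambda\rho^2)^{-N}\rho^k\big)$ and the sum, together with the trivial bound on the innermost ball of radius $\lambda^{-1/2}$, is $O(\lambda^{-k/2})$; or, more cleanly, (b) invoke the version of the stationary phase estimate that assumes only a uniformly invertible $k\times k$ Hessian block on the support of the amplitude and makes no assumption about the existence or location of critical points (\cite{St}, Ch.~VIII, \S2). Option (b) also lets you discard the Morse-lemma normal form entirely, which removes precisely the direction-uniformity bookkeeping you identify as the main obstacle. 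With either substitution your outline becomes a complete proof.
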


Next, we have the following bound
\begin{align}\label{get_Morrey}
\sum_{\lambda\in 2^j \mathbb{Z}^{n+1}}\|F\chi_{Q_\lambda}\|_1^2
\nonumber&=\sum_{\lambda\in 2^j \mathbb{Z}^{n+1}} \Big( \int_{Q_\lambda} |F\chi_{Q_\lambda}| w^{-\frac{p}{2}} w^{\frac{p}{2}}dxdt\Big)^2\\
\nonumber&\leq\sum_{\lambda\in 2^j \mathbb{Z}^{n+1}} \Big( \int_{Q_\lambda} |F\chi_{Q_\lambda}|^2 w^{-p} dxdt \Big)\Big(\int_{Q_\lambda}w^{p}dxdt\Big)\\
\nonumber&\leq \sup_{\lambda\in 2^j \mathbb{Z}^{n+1}}\Big(\int_{Q_\lambda}w^{p}dxdt\Big)
\sum_{\lambda\in 2^j \mathbb{Z}^{n+1}} \Big( \int_{Q_\lambda} |F\chi_{Q_\lambda}|^2 w^{-p}dxdt\Big)\\
&\leq C2^{j(n+1-(2s+1)p)}\|w\|_{\mathfrak{L}^{2s+1,p}}^p \| F \|_{L^2(w^{-p})}^2
\end{align}
while
\begin{align}\label{get_trivial}
\sum_{\lambda\in 2^j\mathbb{Z}^{n+1}}\|F\chi_{Q_\lambda}\|_1^2
\nonumber&\leq\sum_{\lambda\in 2^j\mathbb{Z}^{n+1}}\|F\chi_{Q_\lambda}\|_2^2\|\chi_{Q_\lambda}\|_2^2\\
&\leq C2^{j(n+1)}\|F\|_2^2.
\end{align}
Similarly for $\sum_{\lambda\in 2^j\mathbb{Z}^{n+1}}\|G\chi_{\widetilde{Q}_\lambda}\|_1^2$.
Now, combining \eqref{33}, \eqref{stationary_phase}, \eqref{get_Morrey} and \eqref{get_trivial},
we get the desired estimates \eqref{21}, \eqref{22} and \eqref{23}.


\subsection{Inhomogeneous estimates}
Now we prove the inhomogeneous estimate \eqref{inho}.
First we claim that for $\alpha>2n+4-2n/r-2/q$ and
$1<p\leq(n+1)/\alpha$
\begin{equation}\label{base}
\bigg\|\int_{0}^{t} e^{i(t-s)\sqrt{-\Delta}} P_0 F(\cdot,s) ds \bigg\|_{L^2(w(x,t))}
\leq C\| w \|_{\mathfrak{L}^{\alpha,p}}^{1/2}\|F\|_{L^q_t L^r_x}
\end{equation}
if $1\leq r,q\leq 2$.
Then, by the Littlewood-Paley theorem on weighted $L^2$ spaces as before,
it follows that
\begin{align*}
\bigg\|\int_{0}^{t} e^{i(t-s)\sqrt{-\Delta}}F(\cdot,s)&ds\bigg\|_{L^2(w(x,t))}^2\\
&\leq C\sum_{k}\bigg\|\int_{0}^{t}e^{i(t-s)\sqrt{-\Delta}}P_k\big(\sum_{|j-k|\leq1}P_j F(\cdot,s)\big)ds \bigg\|_{L^2(w(x,t))}^2.
\end{align*}
By using \eqref{base} and scaling, the right-hand side in the above is bounded by
$$C\|w\|_{\mathfrak{L}^{\alpha,p}}
\sum_{k} 2^{k(\alpha-n-3+\frac{2n}{r}+\frac{2}{q})}\big\|\sum_{|j-k|\leq1}P_j F(\cdot,s)\big\|_{L^q_t L^r_x}^2,$$
which is in turn bounded by
$$C\|w\|_{\mathfrak{L}^{\alpha,p}}
\sum_{k} 2^{k(\alpha-n-3+\frac{2n}{r}+\frac{2}{q})}\|P_k F(\cdot,s)\|_{L^q_t L^r_x}^2.$$
Since $q\leq2$, by Minkowski's integral inequality we see that
\begin{align*}
\sum_{k}2^{k(\alpha-n-3+\frac{2n}{r}+\frac{2}{q})}\|P_kF(\cdot,t)\|_{L^q_t L^r_x}^2
&\leq\Big\|\Big(\sum_{k}2^{k(\alpha-n-3+\frac{2n}{r}+\frac{2}{q})}\|P_k F(\cdot,t)\|_{L^r_x}^2\Big)^{\frac{1}{2}} \Big\|_{L^q_t}^2 \\
&=\|F\|_{L^q_t\dot B_{r,2}^{\widetilde{s}+1}}^2
\end{align*}
with $\widetilde{s}+1=\frac12(\alpha-n-3+\frac{2n}{r}+\frac{2}{q})$.
Thus, we get the desired estimate
$$\bigg\|\int_{0}^{t} e^{i(t-s)\sqrt{-\Delta}}F(\cdot,s)ds\bigg\|_{L^2(w(x,t))}
\leq C\|w\|_{\mathfrak{L}^{\alpha,p}}^{1/2}\|F\|_{L^q_t\dot B_{r,2}^{\widetilde{s}+1}}$$
for $\alpha>2n+4-2n/r-2/q$ and $1<p\leq(n+1)/\alpha$ if $1\leq q,r\leq2$ and
\begin{equation}\label{con}
\widetilde{s}+1=\frac12(\alpha-n-3+\frac{2n}{r}+\frac{2}{q}).
\end{equation}
Note that from \eqref{con} the condition $\alpha>2n+4-2n/r-2/q$ is equivalent to
the condition $\widetilde{s}>(n-1)/2$ in Theorem \ref{thm}, and so the proof is completed.

\smallskip

Now it remains to show \eqref{base}.
We will show the following estimate
\begin{equation}\label{eno}
\bigg\|\int_{-\infty}^{t} e^{i(t-s)\sqrt{-\Delta}} P_0 F(\cdot,s) ds \bigg\|_{L^2(w(x,t))}
\leq C\|w\|_{\mathfrak{L}^{\alpha,p}}^{1/2}\|F\|_{L^q_t L^r_x}
\end{equation}
which implies \eqref{base}.
Indeed, to obtain \eqref{base} from \eqref{eno},
we first decompose the $L_t^2$ norm in the left-hand side of \eqref{base}
into two parts, $t\geq0$ and $t<0$. Then the second part can be reduced to the first one
by changing the variable $t\mapsto-t$, and so we only need to consider the first part.
But, since $[0,t)=(-\infty,t)\cap[0,\infty)$, applying \eqref{eno} with $F$ replaced by $\chi_{[0,\infty)}(s)F$,
we can bound the first part as desired.
To show \eqref{eno}, by duality it suffices to show that
$$\bigg|\bigg\langle\int_{-\infty}^{t}e^{i(t-s)\sqrt{-\Delta}}P_0F(\cdot,s)ds,G\bigg\rangle\bigg|
\leq C\|w\|_{\mathfrak{L}^{\alpha,p}}^{1/2}\|F\|_{L^q_tL^r_x}\|G\|_{L^2(w^{-1})}.$$
Let us first write
\begin{align*}
\int_{-\infty}^{t}e^{i(t-s)\sqrt{-\Delta}}P_0F(\cdot,s)ds
&=\int_{\mathbb{R}}\chi_{(0,\infty)}(t-s)e^{i(t-s)\sqrt{-\triangle}}P_0F(\cdot,s)ds\\
&=K\ast F,
\end{align*}
where
$$K(x,t)=\int_{\mathbb{R}^n}\chi_{(0,\infty)}(t)e^{i(x\cdot\xi+t|\xi|)}\phi(|\xi|)d\xi.$$
Then, with the same notations as in the previous section, it is enough to show that
\begin{equation}\label{34}
\sum_{j\geq0}\big|\big\langle(\psi_j K)\ast F,G\big\rangle\big|
\leq C\|w\|_{\mathfrak{L}^{\alpha,p}}^{1/2}\|F\|_{L^q_tL^r_x}\|G\|_{L^2(w^{-1})}.
\end{equation}
For this, we assume for the moment that
\begin{equation}\label{31}
\big|\big\langle(\psi_j K)\ast F,G\big\rangle\big|
\leq C 2^{j (\frac{n}{r'}+\frac{1}{q'}+1)}\|F\|_{L^q_tL^r_x}\|G\|_{L^2}
\end{equation}
and
\begin{equation}\label{32}
\big|\big\langle(\psi_j K)\ast F,G\big\rangle\big|
\leq C2^{j(\frac{n}{r'}+\frac{1}{q'}+1-\frac{\alpha}{2}p)}\|w\|_{\mathfrak{L}^{\alpha,p}}^{p/2}
\|F\|_{L^q_t L^r_x}\|G\|_{L^2(w^{-p})}.
\end{equation}
Then these estimates say that for $\beta_0=-(\frac{n}{r'}+\frac{1}{q'}+1)$ and $\beta_1=-(\frac{n}{r'}+\frac{1}{q'}+1-\frac{\alpha}{2}p)$
\begin{equation*}
\|T(F,G)\|_{\ell_\infty^{\beta_0}(\mathbb{C})}
\leq C\|F\|_{L^q_t L^r_x}\|G\|_{L^2}
\end{equation*}
and
\begin{equation*}
\|T(F,G)\|_{\ell_\infty^{\beta_1}(\mathbb{C})}
\leq C\|w\|_{\mathfrak{L}^{\alpha,p}}^{p/2}\|F\|_{L^q_t L^r_x}\|G\|_{L^2(w^{-p})},
\end{equation*}
where $T$ is given as in \eqref{55}.
Now, by the bilinear interpolation (see \cite{BL}, Section 3.13, Exercise 5(a))
between these two estimates, it follows that for $1<p<\infty$
$$T:(L^q_t L^r_x,L^q_t L^r_x)_{1/p,2}\times(L^2,L^2(w^{-p}))_{1/p,2}
\rightarrow(\ell^{\beta_0}_{\infty}(\mathbb{C}),\ell^{\beta_1}_{\infty}(\mathbb{C}))_{1/p,\infty}$$
with the operator norm  $C\|w\|_{\mathfrak{L}^{\alpha,p}}^{1/2}$.
Using Lemma \ref{id}, we get
\begin{equation*}
\|T(F,G)\|_{\ell_\infty^{\beta}(\mathbb{C})}
\leq C\|w\|_{\mathfrak{L}^{\alpha,p}}^{1/2}\|F\|_{L^q_tL^r_x}\|G\|_{L^2(w^{-1})}
\end{equation*}
with $\beta=(1-\frac1p)\beta_0+\frac1p\beta_1=-(\frac{n}{r'}+\frac{1}{q'}+1-\frac{\alpha}{2})$.
That is to say,
\begin{equation*}
\big|\big\langle(\psi_j K)\ast F,G\big\rangle\big|
\leq C2^{j (\frac{n}{r'}+\frac{1}{q'}+1-\frac{\alpha}{2})}\|w\|_{\mathfrak{L}^{\alpha,p}}^{1/2}
\|F\|_{L^q_tL^r_x}\|G\|_{L^2(w^{-1})} .
\end{equation*}
Thus, when $\alpha>2n+4-2n/r-2/q$, we get the desired estimate \eqref{34}.

Finally, we have to show \eqref{31} and \eqref{32}.
Recall from \eqref{33} that
$$\big|\big\langle(\psi_jK)\ast F,G\big\rangle\big|
\leq\|\psi_jK\|_{\infty}
\Big(\sum_{\lambda\in 2^j\mathbb{Z}^{n+1}}\|F\chi_{Q_\lambda}\|_1^2 \Big)^{\frac{1}{2}}
\Big(\sum_{\lambda\in 2^j\mathbb{Z}^{n+1}}\|G\chi_{\widetilde{Q}_\lambda}\|_1^2\Big)^{\frac{1}{2}}.$$
First, using Lemma \ref{25} as before, we have
\begin{equation}\label{7}
\|\psi_jK\|_{\infty}\leq C2^{-j\frac{n-1}{2}}.
\end{equation}
Next, we may write $Q_{\lambda}=Q_{\lambda(x)}\times Q_{\lambda(t)}$,
where $Q_{\lambda(x)}$ is a cube in $\mathbb{R}^n$ with respect to $x$ variable,
and $Q_{\lambda(t)}$ is an interval in $\mathbb{R}$ with respect to $t$ variable.
Then, since $q,r\leq2$, by Minkowski's integral inequality it follows that
\begin{align*}
\Big( \sum_{\lambda\in 2^j \mathbb{Z}^{n+1}} \| F\chi_{Q_\lambda} \|_1^2 \Big)^{\frac{1}{2}}
&\leq \bigg( \sum_{\lambda\in 2^j \mathbb{Z}^{n+1}} \Big( \int_{Q_{\lambda(t)}} \| F\chi_{Q_\lambda}\|_{L^r_x} |Q_{\lambda(x)}|^{\frac{1}{r'}} dt \Big)^2 \bigg)^{\frac{1}{2}} \\
&\leq \Big( \sum_{\lambda\in 2^j \mathbb{Z}^{n+1}}  \| F\chi_{Q_\lambda}\|_{L^q_t L^r_x}^2 |Q_{\lambda(x)}|^{\frac{2}{r'}} |Q_{\lambda(t)}|^{\frac{2}{q'}} \Big)^{\frac{1}{2}} \\
&\leq C2^{j (\frac{n}{r'}+\frac{1}{q'}) } \| F \|_{L^q_t L^r_x}.
\end{align*}
On the other hand, we have the following bound
\begin{align}\label{8}
\sum_{\lambda\in 2^j\mathbb{Z}^{n+1}}\|F\chi_{Q_\lambda}\|_1^2
\nonumber&=\sum_{\lambda\in 2^j\mathbb{Z}^{n+1}}\Big(\int_{Q_\lambda}|F\chi_{Q_\lambda}|w^{-\frac{p}{2}} w^{\frac{p}{2}}dxdt\Big)^2\\
\nonumber&\leq\sum_{\lambda\in2^j\mathbb{Z}^{n+1}}\Big(\int_{Q_\lambda}|F\chi_{Q_\lambda}|^2 w^{-p}dxdt\Big)
\Big(\int_{Q_\lambda}w^{p}dxdt\Big)\\
\nonumber&\leq \sup_{\lambda \in 2^j \mathbb{Z}^{n+1}} \Big(\int_{Q_\lambda}w^{p}dxdt\Big)
\sum_{\lambda\in 2^j \mathbb{Z}^{n+1}} \Big( \int_{Q_\lambda}|F\chi_{Q_\lambda}|^2w^{-p}dxdt\Big)  \\
&\leq C2^{j(n+1-\alpha p)}\|w\|_{\mathfrak{L}^{\alpha,p}}^p\|F\|_{L^2(w^{-p})}^2.
\end{align}
Combining \eqref{get_trivial}, \eqref{7} and \eqref{8}, we get the desired estimates \eqref{31} and \eqref{32}.

\section{Further applications}\label{sec4}

In this final section we consider the following Cauchy problem
for wave equations with potentials:
    \begin{equation}\label{wavepoten}
    \begin{cases}
    \partial_t^2u-\Delta u+V(x,t)u=0,\\
    u(x,0)=f(x),\\
    \partial_tu(x,0)=g(x),
    \end{cases}
    \end{equation}
where $(x,t)\in\mathbb{R}^{n+1}$, $n=2,3$.
The well-posedness for this problem in the space $L_{x,t}^2(|V|)$ was studied for $n\geq3$ in \cite{RV},
when $V=V_1+V_2$ with $V_1\in L_t^\infty\mathfrak{L}_x^{2,p}$, $(n-1)/2<p\leq n/2$, $V_2\in L_t^rL_x^\infty$, $r>1$,
and $\|V_1\|_{L_t^\infty\mathfrak{L}_x^{2,p}}$ small enough.
Here our main aim is to deal with the two-dimensional case $n=2$. Our result is the following theorem.

\begin{thm}
Let $n=2,3$.
If $n=2$ we assume that
$V\in L_t^1\mathfrak{L}_x^{1-s,r}\cap\mathfrak{L}_{x,t}^{2s+1,p}$ for $3/4\leq s<1$, $1<r\leq 2/(1-s)$ and $1<p\leq(n+1)/(2s+1)$, with
$\|V\|_{L_t^1\mathfrak{L}_x^{1-s,r}}$ and $\|V\|_{\mathfrak{L}_{x,t}^{2s+1,p}}$ small enough.
Similarly, we assume for $n=3$ the same conditions with $s=1$ and $\mathfrak{L}_x^{1-s,r}$ replaced by $L_x^{\infty}$.
Then, if $f\in\dot{H}^s$ and $g\in\dot{H}^{s-1}$, there exists a unique solution of \eqref{wavepoten}
in the space $L_{x,t}^2(|V|)$. Furthermore,
\begin{equation}\label{fur}
\|u\|_{L_{x,t}^2(|V|)}\leq C(\|f\|_{\dot{H}^s}+\|g\|_{\dot{H}^{s-1}}).
\end{equation}
\end{thm}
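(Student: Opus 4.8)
The plan is to realize the solution of \eqref{wavepoten} as the fixed point of the Duhamel map and to run a contraction argument in the Banach space $X=L_{x,t}^2(|V|)$. Since \eqref{wavepoten} is exactly \eqref{wave} with forcing $F=Vu$, the representation \eqref{sol} shows that $u$ solves \eqref{wavepoten} if and only if $u=\Phi(u)$, where
\[
\Phi(u)=\cos(t\sqrt{-\Delta})f+\frac{\sin(t\sqrt{-\Delta})}{\sqrt{-\Delta}}g-\int_0^t\frac{\sin((t-s)\sqrt{-\Delta})}{\sqrt{-\Delta}}V(s)u(s)\,ds.
\]
I would verify that $\Phi$ maps $X$ into itself and is a contraction there; the contraction mapping principle then yields a unique fixed point in $X$, which is the desired solution, and the quantitative estimate \eqref{fur} will drop out of the resulting a priori bound.

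For the free part $\Phi(0)$, I would use that the hypothesis $V\in\mathfrak L_{x,t}^{2s+1,p}$ means precisely $|V|\in\mathfrak L^{2s+1,p}(\mathbb R^{n+1})$, so that the homogeneous estimate \eqref{hop}, applied to $\cos(t\sqrt{-\Delta})f=\tfrac12(e^{it\sqrt{-\Delta}}+e^{-it\sqrt{-\Delta}})f$ and to $\tfrac{\sin(t\sqrt{-\Delta})}{\sqrt{-\Delta}}g$ (here the factor $(\sqrt{-\Delta})^{-1}$ turns $\dot H^{s-1}$ into $\dot H^{s}$ with equal norms), gives
\[
\|\Phi(0)\|_{X}\le C\|V\|_{\mathfrak L_{x,t}^{2s+1,p}}^{1/2}\big(\|f\|_{\dot H^{s}}+\|g\|_{\dot H^{s-1}}\big).
\]

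For the contraction, I note that $\Phi(u_1)-\Phi(u_2)=-\int_0^t\frac{\sin((t-s)\sqrt{-\Delta})}{\sqrt{-\Delta}}V(u_1-u_2)\,ds$. Writing $\sin$ through $e^{\pm i(t-s)\sqrt{-\Delta}}$ and moving $(\sqrt{-\Delta})^{-1}$ onto the Besov index via $\|(\sqrt{-\Delta})^{-1}h\|_{\dot B_{r,2}^{\widetilde s+1}}\le C\|h\|_{\dot B_{r,2}^{\widetilde s}}$, the inhomogeneous estimate \eqref{inho} with $w=|V|$ yields
\[
\|\Phi(u_1)-\Phi(u_2)\|_{X}\le C\|V\|_{\mathfrak L_{x,t}^{2s+1,p}}^{1/2}\,\|V(u_1-u_2)\|_{L_t^1\dot B_{r,2}^{\widetilde s}},
\]
with $\widetilde s=s-(n+4)/2+n/r+1/q$ and $q=1$, chosen so that the scaling index $\alpha$ of \eqref{sca} equals $2s+1$ and the \emph{same} weight norm controls both \eqref{hop} and \eqref{inho}. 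Everything then reduces to the multiplication estimate
\[
\|Vv\|_{L_t^1\dot B_{r,2}^{\widetilde s}}\le C\|V\|_{L_t^1\mathfrak L_x^{1-s,r}}^{1/2}\,\|v\|_{L_{x,t}^2(|V|)},
\]
which I would attack by factoring $V=|V|^{1/2}(\operatorname{sgn}V)|V|^{1/2}$, absorbing one half-power into $\|v\|_{L^2(|V|)}=\||V|^{1/2}v\|_{L^2}$, and estimating the remaining factor by Hölder in $t$ (since $|V|^{1/2}\in L_t^2$) together with a fixed-time bound of the form $\||V|^{1/2}h\|_{\dot B_{r,2}^{\widetilde s}}\le C\||V|^{1/2}\|_{\mathfrak L_x^{(1-s)/2,2r}}\|h\|_{L^2}$, using $\||V|^{1/2}\|_{\mathfrak L^{(1-s)/2,2r}}=\|V\|_{\mathfrak L^{1-s,r}}^{1/2}$.

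Granting these three displays, the contraction constant is $C\|V\|_{\mathfrak L_{x,t}^{2s+1,p}}^{1/2}\|V\|_{L_t^1\mathfrak L_x^{1-s,r}}^{1/2}$, which is $<1$ under the two smallness hypotheses; the fixed point $u$ then satisfies $\|u\|_X\le(1-\theta)^{-1}\|\Phi(0)\|_X$, giving both existence/uniqueness in $X$ and \eqref{fur} (the prefactor $\|V\|_{\mathfrak L_{x,t}^{2s+1,p}}^{1/2}$ being bounded by smallness). For $n=3,s=1$ the class $\mathfrak L_x^{1-s,r}$ degenerates and is replaced by $L_x^\infty$, so $|V|^{1/2}\in L_x^\infty$ and the fixed-time bound simplifies accordingly. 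I expect the genuine obstacle to be precisely this fixed-time multiplication estimate: one must produce the positive regularity $\widetilde s>(n-1)/2$ of the product $|V|^{1/2}h$ out of $h\in L^2$ and only Morrey–Campanato control of $|V|^{1/2}$, and it is the careful bookkeeping of this step—via a Littlewood–Paley decomposition and the scaling of the $\mathfrak L$-norm on dyadic pieces—that I expect to pin down the exact ranges $3/4\le s<1$, $1<r\le 2/(1-s)$ and $1<p\le(n+1)/(2s+1)$.
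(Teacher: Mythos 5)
Your overall fixed-point framework matches the paper's, and your treatment of the free part via \eqref{hop} is fine. But the way you propose to handle the Duhamel term has a fatal gap. You route the contraction step through the inhomogeneous estimate \eqref{inho}, which forces you to control $\|V(u_1-u_2)\|_{L_t^1\dot B_{r,2}^{\widetilde s}}$ with $\widetilde s>(n-1)/2>0$ by $\|u_1-u_2\|_{L^2(|V|)}$. No such multiplication estimate can hold: membership in $L^2(|V|)$ carries no spatial regularity, and multiplication by a merely Morrey--Campanato weight cannot create positive Besov regularity. In particular your proposed fixed-time bound $\||V|^{1/2}h\|_{\dot B_{r,2}^{\widetilde s}}\le C\||V|^{1/2}\|_{\mathfrak{L}_x^{(1-s)/2,2r}}\|h\|_{L^2}$ is false for $\widetilde s>0$ (take $V$ a fixed bump and $h$ highly oscillatory: the left side is not controlled by $\|h\|_2$). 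You correctly flag this step as ``the genuine obstacle,'' but it is not a bookkeeping issue that merely pins down the ranges of $s,r,p$ --- it is an estimate that cannot be repaired within this route.

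The paper closes the loop differently, and this is the missing idea: the contraction must be run with an inhomogeneous estimate whose right-hand side is a weighted $L^2$ norm with the \emph{dual} weight, so that $\|Vu\|_{L^2(|V|^{-1})}=\|u\|_{L^2(|V|)}$ absorbs $V$ with no loss of derivatives. Concretely, the authors combine the Sawyer--Wheeden bound $\|I_\alpha f\|_{L^2(w)}\le C\|w\|_{\mathfrak{L}^{\alpha,r}}^{1/2}\|f\|_2$ for the fractional integral with the dual form of \eqref{hop} to obtain
\begin{equation*}
\bigg\|\int_0^t e^{i(t-s)\sqrt{-\Delta}}F(s)\,ds\bigg\|_{L^2_{x,t}(w)}\le C\|w\|_{L_t^1\mathfrak{L}_x^{\alpha,r}}^{1/2}\|w\|_{\mathfrak{L}^{2s+1,p}}^{1/2}\,\big\||\nabla|^{\alpha+s}F\big\|_{L^2_{x,t}(w^{-1})},
\end{equation*}
and then choose $\alpha=1-s$ (for $n=2$) so that $|\nabla|^{\alpha+s}$ exactly cancels the factor $(\sqrt{-\Delta})^{-1}$ in Duhamel's formula, leaving $\|Vu\|_{L^2(|V|^{-1})}=\|u\|_{L^2(|V|)}$ and hence a contraction under the two smallness hypotheses; the $n=3$ case takes $\alpha=0$ with the trivial $L^\infty$ bound in place of the fractional-integral estimate. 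Estimate \eqref{inho} is never used in this section. If you want to salvage your argument, replace your multiplication step by this dual-weight mechanism.
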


\begin{proof}
As a preliminary step, we recall from \cite{SW} that the fractional integral $I_\alpha$ of convolution
with $|x|^{-n+\alpha}$, $0<\alpha<n$, satisfies the following inequality
\begin{equation}\label{fpi}
\|I_\alpha f\|_{L^2(w(x))}\leq C\|w\|_{\mathfrak{L}^{\alpha,r}}^{1/2}\|f\|_{L^2},
\end{equation}
where $\alpha>0$ and $1<r\leq n/\alpha$.
Indeed, this inequality follows directly from [\cite{SW}, (1.10)] with $w=v^{-1}$ and $p=2$.
Then, using this inequality, we see that
\begin{align*}
\bigg\|\int_{0}^{t}e^{i(t-s)\sqrt{-\Delta}}|\nabla|^{-\alpha}F(\cdot,s)ds\bigg\|_{L_x^2(w(x,t))}
&\leq C\|w\|_{\mathfrak{L}_x^{\alpha,r}}^{1/2}\bigg\|\int_{0}^{t}e^{i(t-s)\sqrt{-\Delta}}F(\cdot,s)ds\bigg\|_{L_x^2}\\
\leq C&\|w\|_{\mathfrak{L}_x^{\alpha,r}}^{1/2}\bigg\|\int e^{-is\sqrt{-\Delta}}\chi_{(0,t)}(s)F(\cdot,s)ds\bigg\|_{L_x^2}
\end{align*}
because the integral kernel of the multiplier operator $|\nabla|^{-\alpha}$ is given by $|x|^{-n+\alpha}$, $0<\alpha<n$.
Combining this estimate and the following dual estimate of \eqref{hop},
\begin{equation*}
\bigg\|\int e^{-is\sqrt{-\Delta}}F(\cdot,s)ds\bigg\|_{L_x^2}
\leq C\|w\|_{\mathfrak{L}^{2s+1,p}}^{1/2}\||\nabla|^{s}F\|_{L_{x,t}^2(w^{-1})},
\end{equation*}
we get for $\alpha>0$, $1<r\leq n/\alpha$, $(n+1)/4\leq s<n/2$, and $1<p\leq(n+1)/(2s+1)$,
\begin{equation}\label{inhof3}
\bigg\|\int_{0}^{t}e^{i(t-s)\sqrt{-\Delta}}F(\cdot,s)ds\bigg\|_{L_{x,t}^2(w(x,t))}
\leq C\|w\|_{L_t^1\mathfrak{L}_x^{\alpha,r}}^{1/2}\|w\|_{\mathfrak{L}^{2s+1,p}}^{1/2}\||\nabla|^{\alpha+s}F\|_{L_{x,t}^2(w^{-1})}.
\end{equation}

Now we consider the solution of \eqref{wavepoten} which is given by
\begin{equation}\label{solf}
u(x,t)=\cos(t\sqrt{-\Delta})f+\frac{\sin(t\sqrt{-\Delta})}{\sqrt{-\Delta}}g
-\int_0^t\frac{\sin((t-s)\sqrt{-\Delta})}{\sqrt{-\Delta}}(Vu)(s)ds,
\end{equation}
where $f\in\dot{H}^s$ and $g\in\dot{H}^{s-1}$.
Then by the homogeneous estimate \eqref{hop},
we only need to show that
\begin{equation}\label{inhof34}
\bigg\|\int_{0}^{t}\frac{e^{i(t-s)\sqrt{-\Delta}}}{\sqrt{-\Delta}}(Vu)(\cdot,s)ds\bigg\|_{L_{x,t}^2(|V|)}
\leq \frac12\|u\|_{L_{x,t}^2(|V|)}
\end{equation}
in order to obtain the well-posedness using the standard fixed-point argument.
But, when $n=2$, one can use \eqref{inhof3}, with $\alpha=1-s$ and $w=|V|$, to conclude that
\begin{align*}
\bigg\|\int_{0}^{t}\frac{e^{i(t-s)\sqrt{-\Delta}}}{\sqrt{-\Delta}}(Vu)(\cdot,s)ds\bigg\|_{L_{x,t}^2(V)}
&\leq C\|V\|_{L_t^1\mathfrak{L}_x^{1-s,r}}^{1/2}\|V\|_{\mathfrak{L}^{2s+1,p}}^{1/2}\|Vu\|_{L_{x,t}^2(|V|^{-1})}\\
&\leq \frac12\|u\|_{L_{x,t}^2(|V|^{-1})}
\end{align*}
if $\|V\|_{L_t^1\mathfrak{L}_x^{1-s,r}}$ and $\|V\|_{\mathfrak{L}^{2s+1,p}}$ are sufficiently small.
Also, \eqref{fur} follows easily from combining \eqref{solf}, \eqref{hop} and \eqref{inhof34}.
Now the proof is complete for $n=2$.

When $n=3$, by repeating the above argument with $\alpha=0$ using \eqref{fpi} replaced by the trivial inequality
$\|f\|_{L^2(w(x))}\leq C\|w\|_{L^\infty}^{1/2}\|f\|_{L^2}$,
one can similarly obtain \eqref{inhof34} under the conditions given in the theorem.
We omit the details.
\end{proof}


\end{document}